\theoremstyle{plain}
\newtheorem{theorem}{Theorem}[section]
\newtheorem{lemma}[theorem]{Lemma}
\newtheorem{prop}[theorem]{Proposition}
\theoremstyle{definition}
\newtheorem{definition}[theorem]{Definition}
\newtheorem{remark}[theorem]{Remark}
\newtheorem{example}[theorem]{Example}
\newtheorem{cor}[theorem]{Corollary}
\theoremstyle{remark}
\begin{document}
	\title[Some special subspaces from the perspective of best coapproximation]{On some special subspaces of a Banach space, from the perspective of best coapproximation}
	\author[Sohel, Ghosh, Sain and  Paul  ]{Shamim Sohel, Souvik Ghosh, Debmalya Sain and Kallol Paul }

	\newcommand{\acr}{\newline\indent}

		\address[Sohel]{Department of Mathematics\\ Jadavpur University\\ Kolkata 700032\\ West Bengal\\ INDIA}
	\email{shamimsohel11@gmail.com}
	
	\address[Ghosh]{Department of Mathematics\\ Jadavpur University\\ Kolkata 700032\\ West Bengal\\ INDIA}
	\email{sghosh0019@gmail.com}
	
	\address[Sain]{Department of Mathematics\\ Indian Institute of Information Technology, Raichur\\ Karnataka 584135 \\INDIA }
	\email{saindebmalya@gmail.com}
	
	\address[Paul]{Department of Mathematics\\ Jadavpur University\\ Kolkata 700032\\ West Bengal\\ INDIA}
	\email{kalloldada@gmail.com, kallol.paul@jadavpuruniversity.in}

		\thanks{ The first and second author would like to thank  CSIR, Govt. of India, for the financial support in the form of Senior Research Fellowship under the mentorship of Prof. Kallol Paul.  
	}

	\subjclass[2020]{Primary 41A65, Secondary  46B20}
	\keywords{Approximate Birkhoff-James orthogonality; Best coapproximations; Polyhedral Banach spaces}

	\maketitle
	
	\begin{abstract}
		We study the best coapproximation problem in Banach spaces, by using Birkhoff-James orthogonality techniques. We introduce two special types of subspaces, christened the anti-coproximinal subspaces and the strongly anti-coproximinal subspaces. We obtain a necessary condition for the strongly anti-coproximinal subspaces in a reflexive Banach space whose dual space satisfies the Kadets-Klee Property. On the other hand, we provide a sufficient condition  for the strongly anti-coproximinal subspaces  in a general  Banach space.  We also characterize the anti-coproximinal subspaces of a smooth Banach space.  Further, we  study these special subspaces in a finite-dimensional polyhedral Banach space and find some interesting geometric structures associated with them.  
	\end{abstract}
	\section{Introduction.}

    Franchetti and Furi introduced the best coapproximation problem in the framework of Banach spaces in \cite{FF}, as a complementary notion to the well-known best approximation problem. Unlike its counterpart, the best coapproximation problem remains a less explored area of research, especially from the computational point of view.  Indeed, the difficulty of the problem arises from the fact that the existence of best coapproximation is not guaranteed even in finite-dimensional Banach spaces. Interested readers are referred to \cite{LT, N, PS, RS, W} for more information on this topic. The concept of contractive mapping is closely connected with the idea of best coapproximation, to delve deeper the readers can go through \cite{Bruck, Bruck1, KL}.  Recently, an algorithmic approach has been given in \cite{SSGP} and \cite{SSGP2} to study the best coapproximation in $\ell_\infty^n$ and $\ell_1^n,$ respectively. The purpose of this article is to identify some special types of subspaces of a Banach space, which may be regarded as the least favorable candidates for the purpose of the best coapproximation problem. We also illustrate that such subspaces enjoy several geometric properties unique to them. Having described the motivation behind this study, let us now introduce the relevant notations and terminologies.\\

Let $ \mathbb{X}, \mathbb{{Y}}$ denote real Banach spaces and let $ \mathbb{H}$ denote a real Hilbert space. We use the notations $B_{\mathbb{X}}$ and $S_{\mathbb{X}}, $  for the unit ball $ B_\mathbb{X} := \{ x \in \mathbb{X}: \|x\| \leq 1\} $ and the unit sphere $ S_\mathbb{X} := \{  x \in \mathbb{X}: \|x\| = 1\} $ of $ \mathbb{X}, $ respectively.  The dual space of $\mathbb{X}$ is denoted by $\mathbb{X}^*.$
	The annihilator of a subspace $\mathbb{Y}$ of $\mathbb{X}$ is defined as $	\mathbb{{Y}}^{\perp} := \{ f\in \mathbb{{X}}^*: f(y)= 0, ~\text{for each } y \in \mathbb{{Y}}\}.$ For  a subspace $\mathbb{Z}$ of $\mathbb{X}^*,$ the pre-annihilator of $\mathbb{Z}$ is defined as $^{\perp}\mathbb{Z} := \{ x\in \mathbb{{X}}: f(x)= 0, ~\text{for each } f\in \mathbb{Z}\}.$
	For a  non-empty convex subset $ A $ of $\mathbb{X},$ an element $ z \in A$ is said to be an extreme point of $A$ if whenever $z = (1-t)x + ty,$ for some $t \in (0, 1)$ and some $ x, y \in A,$ then $ x = y = z.$ The collection of all the extreme points of $ A$ is denoted as $ Ext(A).$
	A Banach space $ \mathbb{X}$ is said to be strictly convex if $Ext(B_{\mathbb{X}}) = S_\mathbb{X}.$ It is easy to observe that strict convexity of $ \mathbb{X} $ is equivalent to the geometric condition that $ S_{\mathbb{X}} $ does not contain any non-trivial straight line segment. We recall that a point $x\in S_\mathbb{X}$ is said to be a rotund point \cite{G} of $B_\mathbb{X}$ if $\|y\|= \|\frac{x+y}{2}\| = 1$ implies that $x=y.$ Clearly, if every point of $S_\mathbb{X}$ is rotund then $\mathbb{X}$ is strictly convex.
	Given any non-zero $ x \in \mathbb{X},$ $ f \in S_{\mathbb{X}^*} $ is said to be a support functional of $x$  if $ f(x)= \|x\|.$ The set of all support functional(s) of a nonzero $ x \in \mathbb{X}$ is written as $ J(x) := \{ f \in S_{\mathbb{X}^*}: f(x)= \|x\|\}.$ For any nonzero $ x \in \mathbb{X},$ $ x$ is said to be smooth if $J(x) $ is singleton. A Banach space $ \mathbb{X}$ is said to be smooth  if each  $x\in S_\mathbb{X}$ is smooth. The collection of all smooth points in $\mathbb{{X}}$ is denoted by $ Sm(\mathbb{X}).$  For a subspace $ \mathbb{Y} $ of $ \mathbb{X}, $ let 
	$\mathcal{J}_{\mathbb{Y}} = \{ f \in S_{\mathbb{X}^*} : f(y) = 1, \text{for some}~  y \in Sm(\mathbb{{X}})\cap S_{\mathbb{Y}} \}.$ It is easy to check that $\mathcal{J}_{\mathbb{Y}} \subseteq Ext(B_{\mathbb{X}^*}).$ Whenever $ Sm(\mathbb{X}) \cap S_\mathbb{{Y}}= \emptyset, $ we define $\mathcal{J}_\mathbb{{Y}}= \emptyset.$ 
 Let us recall the well known definition of  the modulus of smoothness of a Banach space $\mathbb{X}(\neq \{0\})$, which is denoted  $\rho_{\mathbb{X}}(t), $ and is defined as follows:
		\begin{eqnarray*}
	\rho_{\mathbb{X}}(t) &=& \sup \bigg\{\frac{\|x+ t y\|+\|x- t y\|}{2} -1 : x, y \in S_\mathbb{X}\bigg\},
	\end{eqnarray*}
where $ t \in (0, +\infty).$ The space 	$\mathbb{X} (\neq \{0\})$ is said to be uniformly smooth (\cite[Def. 5.5.2]{M}) if	 $\frac{\rho_{\mathbb{X}}(t)}{t}\to 0,$	  as $t \to 0^+.$

	 Given any $f \in \mathbb{X}^*,$ the kernel of $f$ is denoted by $ker~f:= \{ x \in \mathbb{{X}}: f(x)=0\}.$ A finite-dimensional Banach space
	$\mathbb{X}$ is said to be a polyhedral Banach space if $Ext(B_{\mathbb{{X}}}) $ is finite. A convex set $F\subset S_\mathbb{X}$ is said to be a face of $B_\mathbb{X}$ if for some $x_1, x_2 \in S_\mathbb{X},$ $(1-t)x_1+tx_2 \in F$ implies that $x_1, x_2 \in F, $ where $0<t<1.$ A maximal face  of $ B_{\mathbb{X}} $ is said to be a facet. We use the notation $int(F)$ to denote the interior of a face $F$ endowed with the usual subspace topology of $F.$ Given a subset $M$ of $\mathbb{X}^*,$ $\overline{M}^{w^*}$ denotes the closure of $M$ with respect to the weak*- topology defined on $\mathbb{X}^*.$   We also recall that $\mathbb{X}$ satisfies the Kadets-Klee Property if whenever $\{x_n\}$ is a sequence in $\mathbb{X}$ and $x \in \mathbb{X}$ such that $x_n \overset{w}{\to} x$ and $\|x_n \| \to \|x\|,$ it follows that $x_n \to x.$
	Next we recall the definition of  best coapproximation in Banach spaces.

	\begin{definition} \cite{FF}
		Let $ \mathbb{Y} $ be a subspace of Banach space $ \mathbb{X}. $ Given any $ x \in \mathbb{X}, $ we say that $ y_0 \in \mathbb{Y} $ is a best coapproximation to $ x $ out of $ \mathbb{{Y}} $ if $ \| y_0 - y \| \leq \| x - y \|, $ for all $ y \in \mathbb{Y}. $
	\end{definition}
	
	Given any $x\in \mathbb{{X}}$ and a subspace $\mathbb{{Y}}$ of $\mathbb{X},$ $\mathcal{R}_\mathbb{{Y}}(x)$ denotes the (possibly empty)  set of all best coapproximations to $x$ out of $\mathbb{{Y}}.$  A subspace $\mathbb{{Y}}$ of the Banach space $\mathbb{{X}}$ is said to be coproximinal if for any $x\in \mathbb{X},$  $\mathcal{R}_\mathbb{{Y}}(x)\neq \emptyset.$
	The concept of  Birkhoff -James orthogonality plays a vital role  in the  study  of the best coapproximation problem. Given $ x, y \in \mathbb{X}, $ we say that $ x $ is Birkhoff-James orthogonal \cite{B,J} to $ y, $ written as $ x \perp_B y, $ if $ \| x+\lambda y \| \geq \| x \|, $ for all $ \lambda \in \mathbb{R}. $ As mentioned in \cite{FF}, it is easy to observe that given any subspace $ \mathbb{Y} $ of a Banach space $ \mathbb{X} $ and an element $ x \in \mathbb{X}, $ $ y_0 \in \mathbb{Y} $ is a best coapproximation to $ x $ out of $ \mathbb{Y} $ if and only if $ \mathbb{Y} \perp_B (x-y_0), $ i.e., $ y \perp_B (x-y_0) ,$ for all $ y \in \mathbb{Y}.$  We also need the notion of approximate Birkhoff-James orthogonality in our study. 
		In \cite{D}, Dragomir first introduced the approximate Birkhoff-James orthogonality in the following way:\\
	Let $\epsilon \in [0,1).$ Then for $x, y\in \mathbb{X},$ $x$ is said to be approximate $\epsilon$-Birkhoff-James orthogonal to $y$  if for each $\lambda \in \mathbb{R},$ the following holds:
	 \[\|x+\lambda y\| \geq (1-\epsilon)\|x\|.\] 
	Later on Chmieli\'nski \cite{C05} introduced another version of the approximate Birkhoff-James orthogonality. Let $ \epsilon \in [0,1).$  Given any $ x, y \in \mathbb{X},$ we say that $x$ is said to be approximate orthogonal to $y,$ written as $ x \perp_B^\epsilon y,$ if 
	\[ \|x + \lambda y\|^2 \geq \|x\|^2 - 2 \epsilon\|x\|\|\lambda y\|.
	\]
	Recently \cite{C}, an equivalent definition of the approximate orthogonality has been obtained:
	\[x \perp_B^\epsilon y \iff \|x + \lambda y\| \geq \|x\| -  \epsilon \|\lambda y\|,~ \text{for ~every ~} ~\lambda \in \mathbb{R}.
	\]
	In view of this characterization of approximate orthogonality, it is natural to consider  the following approximate version of the best coapproximation problem:  
	
	\begin{definition}
		Let $ \mathbb{Y} $ be a subspace of Banach space $ \mathbb{X}. $ Let $ \epsilon \in [0,1).$  Given any $ x \in \mathbb{X}, $ we say that $ y_0 \in \mathbb{Y} $ is an  $\epsilon$-best coapproximation to $ x $ out of $ \mathbb{{Y}} $ if $ \mathbb{{Y}} \perp_B^\epsilon x- y_0, $ i.e., $ y \perp_B^\epsilon (x-y_0) ,$ for all $ y \in \mathbb{Y}. $ 
	\end{definition}

	The primary purpose of our study is to investigate the least favorable scenario that can arise in studying the best coapproximation problem. Accordingly, we introduce the following two types of subspaces of a Banach space from the perspective of best coapproximation and $\epsilon$-best coapproximation.
	
	\begin{definition}\label{epsilon}
		Let $ \mathbb{Y} $ be a subspace of Banach space $ \mathbb{X}. $ Then 
		\begin{itemize}
			\item[(i)] $\mathbb{{Y}}$ is said to be an \textit{anti-coproximinal subspace} of $\mathbb{X}$ if for any given $x \in \mathbb{X} \setminus \mathbb{Y},$ there does not exist any best coapproximation to $x$ out of $\mathbb{Y}.$
			
			\item[(ii)] $\mathbb{Y}$ is said to be a \textit{strongly anti-coproximinal subspace} of $\mathbb{X}$ if for any given $x \in \mathbb{X} \setminus \mathbb{Y}$ and for any $\epsilon \in [0, 1),$ there does not exist $\epsilon$ -best coapproximation to $x$ out of $\mathbb{Y}.$
		\end{itemize}
	\end{definition}
  
	It is easy to observe that if  $\mathbb{Y}$ is  (strongly) anti-coproximinal in $\mathbb{X}$ and $\mathbb{Z}$ is a subspace of $\mathbb{X}$ containing $\mathbb{Y}$ then $\mathbb{Y}$ is (strongly) anti-coproximinal in $\mathbb{Z}.$ However, it is quite obvious that if $\mathbb{Y}$ is (strongly) anti-coproximinal in $\mathbb{Z},$ it does not imply that $\mathbb{Y}$ is (strongly) anti-coproximinal in $\mathbb{X}$ (see Remark \ref{rem}). Therefore, it is important to specify the mother space whenever we consider the anti-coproximinal and the strongly anti-coproximinal subspaces.
	
		We note from \cite[Example 2.13]{SSGP} that if a subspace $\mathbb{Y}$ is not  coproximinal in $\mathbb{X}$ then it is not necessarily true that $\mathbb{Y}$ is an anti-coproximinal subspace of $\mathbb{X}$. On the other hand, if $\mathbb{Y}$ is a  strongly anti-coproximinal subspace of $\mathbb{X},$ it  implies that  $\mathbb{Y}$ is an  anti-coproximinal subspace of $\mathbb{X}$.  However, as we will observe in Example \ref{example2}, the converse of this fact is not true. It should be noted that every one-dimensional subspace is coproximinal in any Banach space \cite[Lemma 1]{FF}. Whenever the anti-coproximinal and the strongly anti-coproximinal subspaces are concerned, we only consider  the proper subspaces of dimension strictly greater than one.  In order to facilitate the understanding of these two special types of subspaces of a Banach space, let us consider the following simple example:
	
	
	\begin{example}
		Let $\widetilde{v}_1= (3,0,2), \widetilde{v}_2= (0,3,2) \in \ell_{\infty}^3$ and let $\mathbb{Y}= span \{ \widetilde{v}_1, \widetilde{v}_2\}.$ Applying \cite[Th. 2.10]{SSGP} along with some straightforward calculations, we observe that for any $x \in \ell_{\infty}^3 \setminus \mathbb{Y}$, there does not exist a best coapproximation to $x$ out of $\mathbb{Y}.$ In other words, $\mathbb{Y}$ is an anti-coproximinal subspace of $ \ell_{\infty}^3. $ We will see that the same subspace happens to be a strongly anti-coproximinal subspace of $\ell_{\infty}^3$ as well. On the other hand, we will also present several explicit examples of the anti-coproximinal subspaces of a Banach space, which are not strongly anti-coproximinal in that Banach space.
		
	\end{example}

	This article is divided into two sections including the introductory one. The main results are separated in two subsections, viz., Section I and Section II. In Section I, we characterize the anti-coproximinal subspaces in a smooth Banach space and using that we characterize the Hilbert spaces among smooth Banach spaces.   We also present a necessary condition  for a subspace to be strongly anti-coproximinal in a reflexive Banach space, whose dual space satisfies the Kadets-Klee Property. On the other hand, a sufficient condition for the same is obtained in any Banach space. In  Section II, we exclusively consider the finite-dimensional polyhedral Banach spaces. We characterize the strongly anti-coproximinal subspaces in a finite-dimensional polyhedral Banach space and show that the strongly anti-coproximinal subspaces of such spaces possess some nice geometric structures. We study the spaces $\ell_{\infty}^n$ and $\ell_{1}^n$ separately and characterize the anti-coproximinal and the strongly anti-coproximinal subspaces of these distinguished polyhedral Banach spaces, which is especially advantageous from the computational point of view.  \\

	\section{Main Results.}
	
	\section*{section-I}
		
We begin this section with the following two well-known observations.
	
	\begin{theorem}\cite[Th. 2.1]{PS} \label{PS}
		Let $ \mathbb{Y} $ be a subspace of a Banach space $ \mathbb{X} $ and let $x\in \mathbb{X}.$ Then $y_0\in \mathbb{Y}$ is a best coapproximation to $x$ out of $\mathbb{Y}$ if and only if  for any $y \in \mathbb{{Y}},$ there exists $f_y\in J(y)$ such that $f_y(x-y_0)=0.$ 
	\end{theorem}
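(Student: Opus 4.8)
The plan is to reduce the statement to the classical James characterization of Birkhoff-James orthogonality and then apply it pointwise. Recall from the discussion preceding the theorem that $y_0 \in \mathbb{Y}$ is a best coapproximation to $x$ out of $\mathbb{Y}$ if and only if $y \perp_B (x - y_0)$ for every $y \in \mathbb{Y}$. Writing $z := x - y_0$, it therefore suffices to prove, for each fixed nonzero $y \in \mathbb{Y}$, the equivalence
\[
y \perp_B z \iff \text{there exists } f \in J(y) \text{ with } f(z) = 0.
\]
The case $y = 0$ needs no attention, since $0 \perp_B z$ holds trivially and imposes no constraint; so the quantifier ``for any $y \in \mathbb{Y}$'' is effectively over the nonzero $y$.

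First I would dispose of the easy implication. Suppose $f \in J(y)$ satisfies $f(z) = 0$. Then for every $\lambda \in \mathbb{R}$ we have
\[
\|y + \lambda z\| \geq f(y + \lambda z) = f(y) + \lambda f(z) = \|y\|,
\]
using $f \in S_{\mathbb{X}^*}$, $f(y) = \|y\|$ and $f(z) = 0$; hence $y \perp_B z$. Quantifying this over all $y \in \mathbb{Y}$ recovers exactly the best coapproximation property.

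The substantive direction is the converse, which is where I expect the main obstacle to lie. Assume $y \perp_B z$, that is, $\|y + \lambda z\| \geq \|y\|$ for all $\lambda \in \mathbb{R}$. The key observation is that this says precisely $\|y\| = \inf_{\lambda} \|y + \lambda z\| = \operatorname{dist}(y, \operatorname{span}\{z\})$, so $y$ attains its distance to the subspace $M := \operatorname{span}\{z\}$. I would then produce the required functional by Hahn-Banach: define $f$ on $\operatorname{span}\{y, z\}$ by $f(\alpha y + \beta z) := \alpha \|y\|$, observe that the orthogonality hypothesis is exactly what forces $|f(\alpha y + \beta z)| \leq \|\alpha y + \beta z\|$ (so that $f$ has norm one on this subspace), and extend $f$ to all of $\mathbb{X}$ in a norm-preserving manner. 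The resulting $f \in S_{\mathbb{X}^*}$ satisfies $f(y) = \|y\|$, so $f \in J(y)$, and $f(z) = 0$ by construction, which closes the equivalence. The delicate point, and hence the heart of the argument, is precisely the verification that the prescribed functional is bounded by $1$ in norm, which is nothing other than the orthogonality condition recast as the distance identity above.
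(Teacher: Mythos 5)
The paper offers no proof of this statement; it is quoted directly from Papini--Singer as a known result, so there is nothing to compare against except the standard argument. Your proof is correct and is exactly that standard argument: the reduction to $y \perp_B (x-y_0)$ for all $y \in \mathbb{Y}$ (which the paper itself records in the introduction), followed by James's characterization of Birkhoff--James orthogonality, with the easy direction via $\|y+\lambda z\| \ge f(y+\lambda z) = \|y\|$ and the hard direction via a Hahn--Banach extension of $f_0(\alpha y + \beta z) := \alpha\|y\|$, whose norm bound is precisely the orthogonality hypothesis. The only point worth making explicit is that your formula for $f_0$ is well defined: if $z \ne 0$ were a nonzero multiple of $y \ne 0$, then $y \perp_B z$ would fail, so under the hypothesis $y$ and $z$ are either linearly independent or $z = 0$, and in both cases the prescription is unambiguous.
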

	
	\begin{theorem}\cite[Th. 2.3]{C}\label{chm}
		Let $\mathbb{X}$ be a  Banach space. Let $x, y\in \mathbb{X}$ and let $\epsilon \in [0, 1).$ Then the following conditions are equivalent:
		\begin{itemize}
			\item[(i)] $x \perp_B^{\epsilon} y$
			
			\item[(ii)] there exists $f\in J(x)$ such that $|f(y)|\leq \epsilon\|y\|$
		\end{itemize}
	\end{theorem}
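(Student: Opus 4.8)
The plan is to establish the two implications separately, using the characterization of approximate orthogonality recalled above, namely that $x \perp_B^\epsilon y$ if and only if $\|x+\lambda y\| \geq \|x\| - \epsilon\|\lambda y\|$ for every $\lambda \in \mathbb{R}$. The implication (ii) $\Rightarrow$ (i) is the easy direction: given $f\in J(x)$ with $|f(y)|\leq \epsilon\|y\|$, for any $\lambda\in\mathbb{R}$ I would simply estimate $\|x+\lambda y\|\geq f(x+\lambda y)=\|x\|+\lambda f(y)\geq \|x\|-|\lambda|\,|f(y)|\geq \|x\|-\epsilon\|\lambda y\|$, which is exactly $x\perp_B^\epsilon y$. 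Here the first inequality uses $\|f\|=1$ and the last uses $\|\lambda y\|=|\lambda|\|y\|$.

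For the reverse implication (i) $\Rightarrow$ (ii) I would assume $x\neq 0$ (if $x=0$ every $f\in S_{\mathbb{X}^*}$ norms $x$, and one chooses $f$ vanishing on $y$) and work with the convex function $g(\lambda)=\|x+\lambda y\|$ on $\mathbb{R}$. The hypothesis reads $g(\lambda)\geq g(0)-\epsilon\|y\|\,|\lambda|$ for all $\lambda$; dividing by $\lambda>0$ and letting $\lambda\to0^+$ gives $g'_+(0)\geq -\epsilon\|y\|$, while dividing by $\lambda<0$ and letting $\lambda\to0^-$ gives $g'_-(0)\leq \epsilon\|y\|$. The key input is the classical identity expressing these one-sided derivatives through the norming functionals, $g'_+(0)=\max\{f(y):f\in J(x)\}$ and $g'_-(0)=\min\{f(y):f\in J(x)\}$. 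Since $J(x)$ is a nonempty, weak*-compact, convex subset of $S_{\mathbb{X}^*}$ and $f\mapsto f(y)$ is weak*-continuous and affine, the image $\{f(y):f\in J(x)\}$ is exactly the compact interval $[g'_-(0),g'_+(0)]$. The two derivative bounds then say this interval meets $[-\epsilon\|y\|,\epsilon\|y\|]$ (two closed intervals $[a,b]$ and $[c,d]$ intersect precisely when $a\leq d$ and $c\leq b$), so there exists $f\in J(x)$ with $f(y)\in[-\epsilon\|y\|,\epsilon\|y\|]$, i.e. $|f(y)|\leq\epsilon\|y\|$, as required.

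The main obstacle is the directional-derivative identity $g'_+(0)=\max\{f(y):f\in J(x)\}$, which is precisely where the Hahn-Banach theorem enters. I would either invoke the standard convex-analytic fact that the directional derivative of a continuous convex function equals the support function of its subdifferential, the subdifferential of the norm at $x\neq0$ being exactly $J(x)$, or argue from scratch only for the inequality I actually need: for $\lambda_n\to0^+$ pick $f_n\in J(x+\lambda_n y)$; then $g(\lambda_n)-g(0)=f_n(x+\lambda_n y)-g(0)\leq f_n(x+\lambda_n y)-f_n(x)=\lambda_n f_n(y)$, so $f_n(y)\geq \tfrac{g(\lambda_n)-g(0)}{\lambda_n}\geq g'_+(0)\geq -\epsilon\|y\|$, and a weak*-cluster point of $\{f_n\}$ lies in $J(x)$ (by Banach-Alaoglu, together with $f_n(x)\to\|x\|$) and inherits $f(y)\geq -\epsilon\|y\|$. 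Running the same argument with $\lambda_n\to0^-$ produces $f'\in J(x)$ with $f'(y)\leq \epsilon\|y\|$, and a convex combination $tf+(1-t)f'\in J(x)$ then realizes a value inside $[-\epsilon\|y\|,\epsilon\|y\|]$, since its value on $y$ sweeps the whole segment between $f(y)$ and $f'(y)$. This convexity-interpolation step is exactly the small technical point that replaces the need for the full image-is-an-interval claim.
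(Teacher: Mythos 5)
The paper does not prove this statement at all: it is imported verbatim as \cite[Th.\ 2.3]{C} (Chmieli\'nski's characterization of approximate Birkhoff--James orthogonality), so there is no in-paper proof to compare against. Judged on its own, your argument is correct and is essentially the standard proof of this result. The direction (ii)$\Rightarrow$(i) is exactly the one-line estimate $\|x+\lambda y\|\geq f(x)+\lambda f(y)\geq\|x\|-\epsilon\|\lambda y\|$. For (i)$\Rightarrow$(ii) you correctly extract $g'_+(0)\geq-\epsilon\|y\|$ and $g'_-(0)\leq\epsilon\|y\|$ from the defining inequality, and the classical identities $g'_+(0)=\max\{f(y):f\in J(x)\}$, $g'_-(0)=\min\{f(y):f\in J(x)\}$ together with convexity and weak*-compactness of $J(x)$ (so that $\{f(y):f\in J(x)\}$ is the full interval $[g'_-(0),g'_+(0)]$) give a functional with $|f(y)|\leq\epsilon\|y\|$; your interval-intersection criterion is applied correctly. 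The self-contained fallback via $f_n\in J(x+\lambda_n y)$, Banach--Alaoglu, and a convex combination of the two cluster functionals is also sound and avoids quoting the directional-derivative formula wholesale. The only loose end is the degenerate case $x=0$ in a one-dimensional space, where (i) holds vacuously but no $f\in S_{\mathbb{X}^*}$ satisfies $|f(y)|\leq\epsilon\|y\|$ for $y\neq0$; the paper's convention defines $J(x)$ only for nonzero $x$, so this is harmless, but it is worth stating $x\neq0$ explicitly at the outset.
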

	
	Using Theorem \ref{PS}, a simple characterization of the  anti-coproximinal subspace follows immediately.
	
	\begin{prop}
		Let $ \mathbb{Y} $ be a subspace of Banach space $ \mathbb{X}. $ Then	$\mathbb{Y}$ is an anti-coproximinal subspace of $\mathbb{X}$ if and only if for any $x \in \mathbb{X}\setminus \mathbb{Y},$ there exists a $y_0 \in \mathbb{Y}$ such that $x \notin ker~f,$ for any $f \in J(y_0).$
	\end{prop}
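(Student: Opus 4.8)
The plan is to reduce the whole question to the single candidate $y_0 = 0$ and then to read off the answer from Theorem \ref{PS}. The basic observation is the translation invariance hidden in the definition: since $y_0 \in \mathbb{Y}$ is a best coapproximation to $x$ precisely when $\mathbb{Y} \perp_B (x - y_0)$, the element $y_0$ is a best coapproximation to $x$ if and only if $0$ is a best coapproximation to $x - y_0$. As $y_0$ runs through $\mathbb{Y}$, the difference $x - y_0$ runs through the coset $x - \mathbb{Y}$, and one checks at once that $\bigcup_{x \in \mathbb{X} \setminus \mathbb{Y}} (x - \mathbb{Y}) = \mathbb{X} \setminus \mathbb{Y}$ (if $x \notin \mathbb{Y}$ and $y_0 \in \mathbb{Y}$ then $x - y_0 \notin \mathbb{Y}$, while every $w \notin \mathbb{Y}$ is recovered by taking $y_0 = 0$). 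Consequently $\mathbb{Y}$ is anti-coproximinal if and only if $0$ fails to be a best coapproximation to every $w \in \mathbb{X} \setminus \mathbb{Y}$.

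Granting this reduction, I would apply Theorem \ref{PS} with the candidate best coapproximation equal to $0$: the point $0$ is a best coapproximation to $w$ out of $\mathbb{Y}$ if and only if for every $y \in \mathbb{Y}$ there exists $f_y \in J(y)$ with $f_y(w) = 0$. Negating this nested statement, $0$ is \emph{not} a best coapproximation to $w$ exactly when there is some $y_0 \in \mathbb{Y}$ (necessarily $y_0 \neq 0$, since otherwise $J(y_0)$ is empty and the orthogonality $0 \perp_B w$ holds trivially) for which $f(w) \neq 0$ for every $f \in J(y_0)$, i.e.\ $w \notin \ker f$ for all $f \in J(y_0)$.

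Combining the two equivalences and relabelling $w$ as $x$ gives precisely the asserted characterization. The argument is almost entirely formal once the reduction to $y_0 = 0$ is in place; the two points deserving attention are the correct negation of the ``for all $y$, there exists $f_y$'' clause coming from Theorem \ref{PS}, and the verification that sweeping $x$ by elements of $\mathbb{Y}$ covers all of $\mathbb{X} \setminus \mathbb{Y}$, so that quantifying over $w \notin \mathbb{Y}$ loses no information. I expect the negation of the quantifiers to be the only genuine source of potential error.
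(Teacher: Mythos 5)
Your argument is correct and is exactly the route the paper intends: the paper offers no written proof, merely noting that the characterization ``follows immediately'' from Theorem \ref{PS}, and your translation to the candidate $y_0=0$ together with the careful negation of the ``for all $y$, there exists $f_y$'' clause supplies precisely the missing details. The only cosmetic quibble is the parenthetical about $J(0)$ (the paper defines $J$ only for nonzero elements, so the $y=0$ case is simply outside the quantifier rather than an empty-set issue), but this does not affect the validity of the proof.
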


		We next obtain a characterization of the $\epsilon$-best coapproximation, by applying Theorem \ref{chm}.
	
	\begin{prop}\label{approximate}
		Let $ \mathbb{Y} $ be a subspace of a Banach space $ \mathbb{X}. $ Then the following statements are equivalent:
		\begin{itemize}
			\item[(i)] $y_0$ is an $\epsilon$-best coapproximation to $x$ out of $\mathbb{{Y}}$
			\item [(ii)] for any $y \in \mathbb{{Y}},$ $\|x-y\| \geq \| y_0-y\| - \epsilon \|x-y_0\|$
			\item[(iii)] for any $y \in \mathbb{{Y}},$ there exists $ f_y \in J(y)$ such that $ |f_y(x-y_0)| \leq \epsilon\|x-y_0\|.$
		\end{itemize}
		
	\end{prop}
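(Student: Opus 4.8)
The plan is to establish the three-way equivalence by pivoting on the two characterizations already recorded in the excerpt: the norm reformulation of approximate Birkhoff--James orthogonality, namely $u \perp_B^\epsilon v \iff \|u + \lambda v\| \geq \|u\| - \epsilon\|\lambda v\|$ for every $\lambda \in \mathbb{R}$, and the functional description in Theorem \ref{chm}. I would first dispatch the equivalence (i) $\iff$ (iii), which is essentially immediate. By definition, (i) asserts that $y \perp_B^\epsilon (x - y_0)$ for every $y \in \mathbb{Y}$. Applying Theorem \ref{chm} pointwise in $y$, with the roles $u = y$ and $v = x - y_0$, this is equivalent to the existence, for each $y \in \mathbb{Y}$, of some $f_y \in J(y)$ with $|f_y(x - y_0)| \leq \epsilon\|x - y_0\|$, which is precisely (iii). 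No work beyond quoting the theorem for each $y$ is required here.

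The substance lies in (i) $\iff$ (ii), for which I would use the norm characterization of $\perp_B^\epsilon$. Condition (i) unfolds to: for every $y \in \mathbb{Y}$ and every $\lambda \in \mathbb{R}$,
\[
\|y + \lambda(x - y_0)\| \geq \|y\| - \epsilon|\lambda|\,\|x - y_0\|.
\]
The key observation is that, because $\mathbb{Y}$ is a subspace, the parameter $\lambda$ is redundant: it suffices to verify the inequality at $\lambda = 1$. Indeed, the case $\lambda = 0$ is trivial, and for $\lambda \neq 0$ one divides through by $|\lambda|$ and replaces $y$ by $y/\lambda \in \mathbb{Y}$ to recover the $\lambda = 1$ instance; conversely, the $\lambda = 1$ instance scales back up by homogeneity of the norm. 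Thus (i) is equivalent to the single family of inequalities $\|y + (x - y_0)\| \geq \|y\| - \epsilon\|x - y_0\|$, holding for all $y \in \mathbb{Y}$.

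To match this with (ii), I would perform the substitution $y \mapsto y_0 - y$, again permissible since $\mathbb{Y}$ is a subspace and this affine map is a bijection of $\mathbb{Y}$ onto itself. Writing $y_0 - y$ in place of $y$ converts $\|y + (x - y_0)\|$ into $\|(x - y_0) + (y_0 - y)\| = \|x - y\|$ and converts $\|y\|$ into $\|y_0 - y\|$, so the inequality becomes exactly $\|x - y\| \geq \|y_0 - y\| - \epsilon\|x - y_0\|$, which is (ii). Since both reparametrizations are reversible, running them in either direction yields (i) $\iff$ (ii), completing the cycle. I would flag that the only genuine subtlety—rather than any real obstacle—is the bookkeeping of these two reparametrizations (absorbing $\lambda$ via homogeneity, and the affine shift $y \mapsto y_0 - y$), together with checking that each is a bijection of $\mathbb{Y}$ so that the universal quantifier over $\mathbb{Y}$ is preserved; everything else is a direct appeal to the stated characterizations.
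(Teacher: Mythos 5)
Your proposal is correct and follows essentially the same route as the paper: the paper also reduces (i) to the $\lambda=1$ instance, uses the substitution $y \mapsto y_0 - \frac{1}{\lambda}y$ to recover the general $\lambda$ from (ii), and invokes Theorem \ref{chm} for the passage between the orthogonality condition and the functional condition (iii); you merely organize it as two biconditionals instead of the cycle (i)$\Rightarrow$(ii)$\Rightarrow$(iii)$\Rightarrow$(i).
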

	
	\begin{proof}
		
		$ (i) \Rightarrow (ii)$ :
		Let $y_0$ be an $\epsilon$-best coapproximation to $x$ out of $\mathbb{{Y}}.$ Therefore, $ \mathbb{{Y}} \perp_B^\epsilon (x-y_0),$ i.e., $y \perp_B^\epsilon (x-y_0),$ for any $y \in \mathbb{Y}.$ Thus for any $y\in \mathbb{{Y}},$ 
		$ \|y +  \lambda(x-y_0) \| \geq \|y\|- \epsilon \|\lambda(x-y_0)\|,
		$
		for all $\lambda \in \mathbb{R}.$ Putting $\lambda =1,$ we get
	$	\|y +  (x-y_0) \| \geq \|y\|- \epsilon \|(x-y_0)\| ,$  from which the desired inequality follows easily. \\
			$ (ii) \Rightarrow (iii)$ : 
		For any $y \in \mathbb{{Y}}$ and for any nonzero $\lambda \in \mathbb{R},$
		\begin{eqnarray*}
			\| y + \lambda (x-y_0)\| &=& |\lambda| \|x - (y_0-\frac{1}{\lambda} y)\| \\ &\geq & |\lambda|  \{\| y_0-(y_0-\frac{1}{\lambda} y) \| - \epsilon \| x- y_0\|\}\\
			&=& |\lambda |( \|\frac{1}{\lambda} y\| -  \epsilon \| x-y_0\|) \\ 
			&=& \|y\| - \epsilon \|\lambda (x-y_0)\|.
		\end{eqnarray*}
		For $\lambda=0,$ the above inequality holds trivially. This implies $y \perp_B^\epsilon (x-y_0),$ for any $y \in \mathbb{Y}$  and so the result follows from Theorem \ref{chm}. \\
		$ (iii) \Rightarrow (i)$ : 
	Following Theorem \ref{chm}, we get $y \perp_B^{\epsilon} x-y_0,$ for all $y \in \mathbb{Y}.$ Thus we obtain $\mathbb{Y}\perp_B^{\epsilon} x-y_0.$
	\end{proof}
	

	Throughout the next part of this section, we focus on the anti-coproximinality and the strong anti-coproximinality of the closed subspaces in a Banach space. In the following proposition, we observe that every dense subspace of a Banach space is strongly anti-coproximinal in that space  and consequently, anti-coproximinal too.

	\begin{prop}
		Let $\mathbb{Y}$ be a dense subspace of a Banach space $\mathbb{X}.$ Then $\mathbb{Y}$ is a strongly anti-coproximinal subspace of $\mathbb{X}.$
	\end{prop}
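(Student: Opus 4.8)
The plan is to argue by contradiction, exploiting the density of $\mathbb{Y}$ together with the norm-inequality characterization of $\epsilon$-best coapproximation. Suppose, contrary to the claim, that some $x \in \mathbb{X} \setminus \mathbb{Y}$ admits an $\epsilon$-best coapproximation $y_0 \in \mathbb{Y}$ for some $\epsilon \in [0,1)$. Since $x \notin \mathbb{Y}$ while $y_0 \in \mathbb{Y}$, we automatically have $\|x - y_0\| > 0$; this is precisely the quantity I intend to force to vanish, producing the contradiction.

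First I would invoke the equivalence $(i) \Leftrightarrow (ii)$ of Proposition \ref{approximate}: the assumption that $y_0$ is an $\epsilon$-best coapproximation to $x$ means that
\[
\|x - y\| \geq \|y_0 - y\| - \epsilon \|x - y_0\|
\]
holds for \emph{every} $y \in \mathbb{Y}$. The key idea is that, because $\mathbb{Y}$ is dense, the admissible test elements $y$ can be taken to approximate $x$ itself as closely as we wish.

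Next, using density I would select a sequence $\{y_n\} \subseteq \mathbb{Y}$ with $y_n \to x$ in norm, and substitute $y = y_n$ into the displayed inequality. By continuity of the norm, the left-hand side $\|x - y_n\|$ tends to $0$, while $\|y_0 - y_n\| \to \|y_0 - x\| = \|x - y_0\|$ and the term $\epsilon\|x-y_0\|$ stays fixed. Passing to the limit therefore yields $0 \geq \|x-y_0\| - \epsilon\|x-y_0\| = (1 - \epsilon)\|x - y_0\|$. Since $\epsilon \in [0,1)$ ensures $1 - \epsilon > 0$, this forces $\|x - y_0\| = 0$, i.e. $x = y_0 \in \mathbb{Y}$, contradicting $x \in \mathbb{X} \setminus \mathbb{Y}$. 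Hence no such $\epsilon$-best coapproximation can exist, and $\mathbb{Y}$ is strongly anti-coproximinal in $\mathbb{X}$.

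I do not anticipate any serious obstacle here; the proof reduces to a single limiting computation once the correct characterization is in hand. The only points demanding mild care are that the inequality from Proposition \ref{approximate}(ii) be applied in the correct direction, and that the approximating sequence $\{y_n\}$ be genuinely admissible as a choice of $y$ — which it is, since the defining inequality is required to hold for all $y \in \mathbb{Y}$.
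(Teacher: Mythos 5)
Your argument is correct and is essentially the paper's proof: both take a sequence $y_n \in \mathbb{Y}$ with $y_n \to x$ and pass to the limit in the $\epsilon$-orthogonality inequality, using $1-\epsilon>0$ to force the relevant distance to vanish. The only cosmetic difference is that you route the computation through Proposition \ref{approximate}(ii) and keep $y_0$ explicit, whereas the paper works directly with $\|y_n+\lambda x\|\geq\|y_n\|-\epsilon\|\lambda x\|$ (after absorbing $y_0$ into $x$) and concludes via $x\perp_B^\epsilon x \Rightarrow x=0$.
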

	
	\begin{proof}

		Suppose on the contrary that $\mathbb{Y}$ is not a strongly anti-coproximinal subspace of $\mathbb{X}.$ Then there exists an $\epsilon \in [0, 1)$ and an $x \in \mathbb{X}\setminus \mathbb{Y}$ such that $\mathbb{Y} \perp_B^{\epsilon} x.$ Since $\mathbb{Y}$ is dense in $\mathbb{X},$ it follows that there exists a sequence $\{y_n\}_{n \in \mathbb{N}} \subset \mathbb{Y}$ such that $y_n \to x.$ Also, we note that $y_n \perp_B^{\epsilon} x.$ Then for each $\lambda \in \mathbb{R},$ we have $\|y_n + \lambda x\| \geq \|y_n\| - \epsilon \|\lambda x\|.$  Since the norm function is continuous, letting $ n \to \infty, $ we get that $\|x+\lambda x\| \geq \|x\| - \epsilon\|\lambda x\|,$ for each $\lambda \in \mathbb{R}.$ This implies that $x \perp_B^{\epsilon} x,$ and therefore $x=0,$ which contradicts the fact that $x \in \mathbb{X} \setminus \mathbb{Y}.$

	\end{proof}

 In the next theorem we characterize the anti-comproximinal subspaces in a smooth Banach space.

	\begin{theorem}\label{anticoproximinal}
		Let $ \mathbb{Y} $ be a subspace of a smooth Banach space $ \mathbb{X}. $ Then $\mathbb{Y}$ is an anti-coproximinal subspace of $\mathbb{X}$ if and only if $\overline{span~ \mathcal{J}_{\mathbb{Y}}}^{w^*}=\mathbb{X}^*$.
	\end{theorem}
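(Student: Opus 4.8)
The plan is to convert the best-coapproximation condition into a membership statement about the pre-annihilator of $\mathcal{J}_{\mathbb{Y}}$, and then read off the conclusion from elementary weak*-duality.

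First I would exploit smoothness. Since $\mathbb{X}$ is smooth, every nonzero vector has a unique support functional, so $Sm(\mathbb{X}) \cap S_{\mathbb{Y}} = S_{\mathbb{Y}}$ and $\mathcal{J}_{\mathbb{Y}}$ is precisely the set of (unique) support functionals of the unit vectors $y \in S_{\mathbb{Y}}$. Now I apply Theorem \ref{PS}: $y_0 \in \mathbb{Y}$ is a best coapproximation to $x$ if and only if for each $y \in \mathbb{Y}$ the now-unique functional in $J(y)$ annihilates $x-y_0$. Because $J(\lambda y)$ gives the same kernel condition as $J(y)$ for every nonzero scalar $\lambda$, and the case $y=0$ is automatic, it suffices to range over $y \in S_{\mathbb{Y}}$. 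Thus the condition reads $f(x-y_0)=0$ for all $f \in \mathcal{J}_{\mathbb{Y}}$, i.e. $x-y_0 \in {}^{\perp}\mathcal{J}_{\mathbb{Y}}$. Writing $W := \overline{span~\mathcal{J}_{\mathbb{Y}}}^{w^*}$ and using the standard fact that a vector annihilates a set exactly when it annihilates the weak*-closed linear span of that set, I obtain that $x$ admits a best coapproximation out of $\mathbb{Y}$ if and only if $x \in \mathbb{Y} + {}^{\perp}W$.

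Next I would translate anti-coproximinality. By the previous step the set of all $x$ possessing a best coapproximation out of $\mathbb{Y}$ is exactly $\mathbb{Y} + {}^{\perp}W$, where the inclusion $\mathbb{Y} \subseteq \mathbb{Y} + {}^{\perp}W$ is automatic since $y_0 = x$ works for $x \in \mathbb{Y}$. Hence $\mathbb{Y}$ is anti-coproximinal, meaning no $x \in \mathbb{X}\setminus\mathbb{Y}$ has a best coapproximation, precisely when $\mathbb{Y} + {}^{\perp}W = \mathbb{Y}$, that is, when ${}^{\perp}W \subseteq \mathbb{Y}$. So the theorem reduces to proving the equivalence ${}^{\perp}W \subseteq \mathbb{Y} \iff W = \mathbb{X}^*$.

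The forward implication of this reduction is the crux, and I would argue it by contradiction. If $W \neq \mathbb{X}^*$, then $W$ is a proper weak*-closed subspace of $\mathbb{X}^*$; since the dual of $(\mathbb{X}^*, w^*)$ is $\mathbb{X}$ (equivalently, $({}^{\perp}W)^{\perp}=W$ for weak*-closed $W$), there is a nonzero $x_0 \in {}^{\perp}W$. The hypothesis ${}^{\perp}W \subseteq \mathbb{Y}$ then forces $x_0 \in \mathbb{Y}$, so $x_0/\|x_0\| \in S_{\mathbb{Y}}$, and its unique support functional $f$ belongs to $\mathcal{J}_{\mathbb{Y}} \subseteq W$. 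But $f(x_0) = \|x_0\| > 0$ contradicts $x_0 \in {}^{\perp}W$, so $W = \mathbb{X}^*$. The converse is immediate, as $W = \mathbb{X}^*$ gives ${}^{\perp}W = \{0\} \subseteq \mathbb{Y}$. The main obstacle is the first reduction and its bookkeeping, namely identifying $\mathcal{J}_{\mathbb{Y}}$ under smoothness and passing from the pointwise-in-$y$ condition of Theorem \ref{PS} to the single pre-annihilator condition, together with the passage from $\mathcal{J}_{\mathbb{Y}}$ to its weak*-closed span; once the statement is phrased as ${}^{\perp}W \subseteq \mathbb{Y} \iff W = \mathbb{X}^*$, the self-annihilation contradiction $f(x_0)=\|x_0\|\neq 0$ closes it cleanly.
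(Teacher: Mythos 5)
Your proposal is correct and follows essentially the same route as the paper: both reduce the best-coapproximation condition via Theorem \ref{PS} and smoothness to annihilation of $x-y_0$ by $\mathcal{J}_{\mathbb{Y}}$, both invoke the weak*-bipolar identity $(^{\perp}W)^{\perp}=W$ from Rudin, and both hinge on the observation that $^{\perp}W\cap\mathbb{Y}=\{0\}$ because the support functional of any unit vector of $\mathbb{Y}$ lies in $\mathcal{J}_{\mathbb{Y}}$ and does not vanish on it. Your packaging of everything into the single equivalence $^{\perp}W\subseteq\mathbb{Y}\iff W=\mathbb{X}^*$ is a tidier bookkeeping of the same argument (the paper instead uses a net for sufficiency and a quotient-duality plus James-orthogonality step for necessity), but no new idea is involved.
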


	\begin{proof}

		Let us first prove the sufficient part of the theorem. Suppose on the contrary that $\mathbb{Y}$ is not an anti-coproximinal subspace of $\mathbb{X}$. This implies that for some $x \in \mathbb{X}\setminus \mathbb{Y},$ there exists a $y_0 \in \mathbb{Y}$ such that $y_0$ is a best coapproximation to $x$ out of $\mathbb{Y}.$ Since $\mathbb{X}$ is smooth, it follows from Theorem \ref{PS} that for any $y \in \mathbb{Y},$ $f_y(x-y_0) = 0,$ where $ J(y)= \{f_y\}.$  It is easy to see that  $ \mathcal{J}_\mathbb{Y} = \{f_y : y \in \mathbb{Y}\}.$
		This implies, $g(x-y_0)=0,$ for any $g \in span ~\mathcal{J}_{\mathbb{{Y}}}.$  Now let us consider $f \in \mathbb{X}^*.$  Since $\overline{span~\mathcal{J}_\mathbb{Y}}^{w^*}=\mathbb{X}^*$, it follows that  there exists a net $\{f_\alpha\}_{\alpha \in \Lambda} \subset span~\mathcal{J}_\mathbb{Y}$ such that $f_\alpha$ is weak*-convergent to $f.$ 
		 Since $f_\alpha(x-y_0) = 0,$ for each $\alpha \in \Lambda,$ it follows that $f(x-y_0) = 0.$	 Note that $f \in \mathbb{X}^*$ is taken arbitrarily. Thus we obtain that $f(x-y_0) = 0,$ for all $f \in \mathbb{X}^*,$ i.e., $x - y_0 = 0,$ which contradicts the fact that $x \in \mathbb{X}\setminus \mathbb{Y}.$ \\	
		
		Now we prove the necessary part of the theorem.
		Suppose on the contrary that $\overline{span~\mathcal{J}_\mathbb{Y}}^{w^*} \subsetneq \mathbb{X}^*.$
		 Clearly, $^{\perp}(span~\mathcal{J}_\mathbb{Y}) = \cap_{f \in \mathcal{J}_\mathbb{Y}} ker~f.$ We note from \cite[Th. 4.7]{R} that $(^{\perp}(span~\mathcal{J}_\mathbb{Y}))^{\perp} = \overline{span~\mathcal{J}_\mathbb{Y}}^{w^*}.$ This implies that $(\cap_{f \in \mathcal{J}_\mathbb{Y}} ker~f)^{\perp} = \overline{span~\mathcal{J}_\mathbb{Y}}^{w^*}.$ Following \cite[Th. 4.7]{R}, we obtain that $(\cap_{f \in \mathcal{J}_\mathbb{Y}} ker~f)^* $ is isometrically isomorphic to $ \mathbb{X}^*/ \overline{span~\mathcal{J}_\mathbb{Y}}^{w^*}.$ Therefore, $(\cap_{f \in \mathcal{J}_\mathbb{Y}} ker~f)^* \neq 0,$ which implies that $\cap_{f \in \mathcal{J}_\mathbb{Y}} ker~f \neq 0.$ 
		Moreover, it is straightforward to see that $ (\cap_{f \in \mathcal{J}_\mathbb{Y}} ker~f ) \cap \mathbb{Y} = \{0\}.$ So, there exists a $z \in \mathbb{X}\setminus \mathbb{Y} $ such that $z \in \cap_{f \in \mathcal{J}_\mathbb{Y}} ker~f. $ Since $\mathbb{X}$ is smooth, applying \cite[Th. 2.1]{J}, we obtain that $\mathbb{Y} \perp_B z.$ Thus $0$ is a best coapproximation to $z$ out of $\mathbb{Y}.$ This contradicts the fact that $\mathbb{Y}$ is an anti-coproximinal subspace of $\mathbb{X}$. This proves the necessary part and completes the proof of the theorem.
	
	\end{proof}

	If $\mathbb{X}$ is finite-dimensional then we have the following corollary.
	
	\begin{cor}\label{cor}
	Let $\mathbb{Y}$ be a subspace of an $n$-dimensional smooth Banach space $\mathbb{X}.$ Then $\mathbb{Y}$ is an anti-coproximinal subspace of $\mathbb{X}$ if and only if $dim(span~ \mathcal{J}_\mathbb{Y})=n.$
\end{cor}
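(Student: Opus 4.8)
The plan is to deduce this corollary directly from Theorem~\ref{anticoproximinal}, since the only thing that changes in finite dimensions is that the weak*-topology on $\mathbb{X}^*$ coincides with the norm topology, and every finite-dimensional subspace is automatically weak*-closed. First I would invoke Theorem~\ref{anticoproximinal}, which asserts that $\mathbb{Y}$ is anti-coproximinal in the smooth space $\mathbb{X}$ if and only if $\overline{span~\mathcal{J}_\mathbb{Y}}^{w^*}=\mathbb{X}^*$. The goal is therefore to show that, when $\dim\mathbb{X}=n$, the condition $\overline{span~\mathcal{J}_\mathbb{Y}}^{w^*}=\mathbb{X}^*$ is equivalent to $\dim(span~\mathcal{J}_\mathbb{Y})=n$.

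Next I would observe that $\mathbb{X}^*$ is itself $n$-dimensional, so its only subspace of dimension $n$ is $\mathbb{X}^*$ itself. Since $span~\mathcal{J}_\mathbb{Y}$ is a finite-dimensional subspace of $\mathbb{X}^*$, it is closed in the norm topology; moreover, on a finite-dimensional space the weak*-topology and the norm topology agree, so $\overline{span~\mathcal{J}_\mathbb{Y}}^{w^*}=span~\mathcal{J}_\mathbb{Y}$. Hence the hypothesis $\overline{span~\mathcal{J}_\mathbb{Y}}^{w^*}=\mathbb{X}^*$ of Theorem~\ref{anticoproximinal} simplifies to $span~\mathcal{J}_\mathbb{Y}=\mathbb{X}^*$, which, because both sides are subspaces of the $n$-dimensional space $\mathbb{X}^*$, holds if and only if $\dim(span~\mathcal{J}_\mathbb{Y})=n$.

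Putting these two equivalences together yields the claim: $\mathbb{Y}$ is anti-coproximinal in $\mathbb{X}$ iff $\overline{span~\mathcal{J}_\mathbb{Y}}^{w^*}=\mathbb{X}^*$ (by the theorem) iff $\dim(span~\mathcal{J}_\mathbb{Y})=n$ (by the dimension count above). I do not expect any genuine obstacle here; the corollary is essentially an unpacking of the infinite-dimensional statement in the degenerate topological setting of a finite-dimensional space. The only point requiring a word of care is the justification that $span~\mathcal{J}_\mathbb{Y}$ is weak*-closed, which I would supply by the standard fact that every finite-dimensional subspace of a topological vector space is closed and that the weak* and norm topologies coincide when $\dim\mathbb{X}^*<\infty$; both are entirely routine.
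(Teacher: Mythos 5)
Your argument is exactly the route the paper intends: the corollary is stated without proof as an immediate consequence of Theorem~\ref{anticoproximinal}, and your observation that in finite dimensions $span~\mathcal{J}_\mathbb{Y}$ is automatically weak*-closed (so the closure condition reduces to $span~\mathcal{J}_\mathbb{Y}=\mathbb{X}^*$, i.e.\ $\dim(span~\mathcal{J}_\mathbb{Y})=n$) is precisely the missing unpacking. The proposal is correct and matches the paper's approach.
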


Next we give an example of an anti-coproximinal subspace in $\ell_p^n,$ where $2 < p < \infty.$ We require the following well-known result which explicitly describes the support functional of an element of $\ell_p^n.$ Suppose that $\phi$ is the isometric isomorphism between  $(\ell_p^n)^*$ and $\ell_q^n,$ where $\frac{1}{p} + \frac{1}{q}=1.$

\begin{lemma}\label{support}
	
	Let $\widetilde{x}= (x_1, x_2, \ldots, x_n) \in \ell_p^n,$ where $1 <  p < \infty.$ Then 
	$J(\widetilde{x}) = \{\widetilde{f}\}, $ where $\phi (\widetilde{f})= \Big(\frac{x_1|x_1|^{p-2}}{\|\widetilde{x}\|_p^{p/q}}, \frac{x_2|x_2|^{p-2}}{\|\widetilde{x}\|_p^{p/q}}, \ldots, \frac{x_n|x_n|^{p-2}}{\|\widetilde{x}\|_p^{p/q}}\Big)\in \ell_q^n,$ $\frac{1}{p}+\frac{1}{q} = 1$ and $\|\widetilde{x}\|_p = \big(\sum_{i=1}^n |x_i|^p\big)^{\frac{1}{p}}.$ 
\end{lemma}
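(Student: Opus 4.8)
The plan is to work through the standard duality $\phi \colon (\ell_p^n)^* \to \ell_q^n$ that the statement already posits, under which a functional $\widetilde{f}$ acts on $\widetilde{y} = (y_1, \dots, y_n) \in \ell_p^n$ by the pairing $\widetilde{f}(\widetilde{y}) = \sum_{i=1}^n g_i y_i$, where $\phi(\widetilde{f}) = \widetilde{g} = (g_1, \dots, g_n)$ and $\|\widetilde{f}\| = \|\widetilde{g}\|_q$. Writing $g_i = x_i |x_i|^{p-2} / \|\widetilde{x}\|_p^{p/q}$ (with the convention $g_i = 0$ whenever $x_i = 0$, which is needed only in the range $1 < p < 2$ to keep the expression meaningful), the lemma reduces to two claims: that this particular $\widetilde{f}$ lies in $J(\widetilde{x})$, and that it is the only member of $J(\widetilde{x})$.

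For the first claim I would verify $\widetilde{f} \in S_{(\ell_p^n)^*}$ and $\widetilde{f}(\widetilde{x}) = \|\widetilde{x}\|_p$ by two direct computations, each resting on an elementary exponent identity coming from $\frac{1}{p} + \frac{1}{q} = 1$. Since $x_i \cdot x_i|x_i|^{p-2} = |x_i|^p$, one gets $\widetilde{f}(\widetilde{x}) = \|\widetilde{x}\|_p^{p}/\|\widetilde{x}\|_p^{p/q} = \|\widetilde{x}\|_p^{\,p - p/q} = \|\widetilde{x}\|_p$, using $p - p/q = p(1 - \tfrac{1}{q}) = 1$. For the norm, since $(p-1)q = p$, one computes $\|\widetilde{g}\|_q^q = \sum_i |x_i|^{(p-1)q}/\|\widetilde{x}\|_p^{p} = \|\widetilde{x}\|_p^p / \|\widetilde{x}\|_p^p = 1$, so $\|\widetilde{f}\| = 1$. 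Together these show $\widetilde{f} \in J(\widetilde{x})$.

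For uniqueness, the clean route is the equality case of H\"older's inequality. Any $\widetilde{h} \in J(\widetilde{x})$ corresponds to some $\widetilde{g}' \in S_{\ell_q^n}$ with $\sum_i g_i' x_i = \|\widetilde{x}\|_p = \|\widetilde{g}'\|_q \|\widetilde{x}\|_p$, so H\"older's inequality is saturated. Equality in H\"older (for $1 < p, q < \infty$) forces both the sign alignment $g_i' x_i = |g_i'|\,|x_i|$ for every $i$ and the proportionality of $(|g_i'|^q)_i$ and $(|x_i|^p)_i$; combined with $\|\widetilde{g}'\|_q = 1$, these conditions pin down each $g_i'$ and reproduce exactly the formula for $g_i$. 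Alternatively one may simply record that $\ell_p^n$ is smooth for $1 < p < \infty$, so that $J(\widetilde{x})$ is automatically a singleton once a member has been exhibited, and the first claim then finishes the proof.

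The computations are entirely routine; the only point requiring a little care is the degenerate coordinates $x_i = 0$ in the range $1 < p < 2$, where $|x_i|^{p-2}$ is not defined and must be read together with the leading factor $x_i$ as $0$. I expect the main (modest) obstacle to be stating the H\"older equality conditions precisely enough — in particular the sign-matching condition, not merely the proportionality of the moduli — so that genuine uniqueness of $\widetilde{g}'$, and not just the shape of $|g_i'|$, is actually delivered.
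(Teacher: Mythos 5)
Your proposal is correct: the direct verification that $\widetilde{f}(\widetilde{x})=\|\widetilde{x}\|_p$ and $\|\phi(\widetilde{f})\|_q=1$ via the exponent identities $p-p/q=1$ and $(p-1)q=p$, together with the equality case of H\"older's inequality (or, alternatively, the known smoothness of $\ell_p^n$ for $1<p<\infty$) for uniqueness, is exactly the standard argument. The paper itself offers no proof --- it records the lemma as a well-known fact --- so there is nothing to compare against; your handling of the degenerate coordinates $x_i=0$ when $1<p<2$ and your insistence on the sign-matching condition in the H\"older equality case are the right points of care.
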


\begin{example}\label{example}
	Let us consider the space $\ell_p^n,$ where $ p \in (1, \infty) \setminus \{ 2 \} $ and $n \geq 3$ with $\{e_1, e_2, \ldots, e_n\}$ as the standard ordered basis. Suppose that  $\mathbb{Y}$ is a hyperspace of $\ell_p^n,$ where $\mathbb{Y} = span\{\widetilde{x}_1, \widetilde{x}_2, \ldots, \widetilde{x}_{n-1}\},$  $\widetilde{x}_1= (1, 1, 1, 0, \ldots, 0), \widetilde{x}_2 = (1, 2, 3, 0, \ldots, 0),  \widetilde{x}_3 = e_4, \ldots, \widetilde{x}_{n-1} = e_n. $ Let $J(\widetilde{x}_i)= \{\widetilde{f}_i\},$ for any $ 1 \leq i \leq n.$ Clearly, $\widetilde{f}_i \in \mathcal{J}_{\mathbb{Y}}.$ Applying Lemma \ref{support}, we obtain the following: \\
	$ \phi(\widetilde{f}_1) = \bigg(\frac{1}{3^{1-\frac{1}{p}}}, \frac{1}{3^{1-\frac{1}{p}}}, \frac{1}{3^{1-\frac{1}{p}}}, 0, \ldots, 0 \bigg) \in \ell_q^n,$\\
	$ \phi(\widetilde{f}_2) = \bigg(\frac{1}{(1+2^p+3^p)^{1-\frac{1}{p}}}, \frac{2^{p-1}}{(1+2^p+3^p)^{1-\frac{1}{p}}}, \frac{3^{p-1}}{(1+2^p+3^p)^{1-\frac{1}{p}}}, 0, \ldots, 0 \bigg) \in \ell_q^n,$\\
	and $ \phi( \widetilde{f}_k) = e_{k+1} \in \ell_q^n,$ for all $3 \leq k \leq n-1.$ Consider the element $ \widetilde{x}_n = 3\widetilde{x}_1 - \widetilde{x}_2 \in \mathbb{Y}$ and let $ J( \widetilde{x}_n)= \{ \widetilde{f}_n\}.$ Again using Lemma \ref{support}, we get $ \phi (\widetilde{f}_n) = \Big(\frac{2^{p-1}}{(2^p+1)^{1-\frac{1}{p}}}, \frac{1}{(2^p+1)^{1-\frac{1}{p}}}, 0, \ldots, 0\Big) \in \ell_q^n.$ A straightforward computation shows that $ \{ \widetilde{f}_1, \widetilde{f}_2, \ldots, \widetilde{f}_n\}$ is a linearly independent set. Therefore, $dim(span~\mathcal{J}_\mathbb{Y}) = n.$ Thus from Corollary \ref{cor},  we conclude that $\mathbb{Y}$ is an anti-coproximinal subspace of $\ell_p^n,$ where $ p \in (1, \infty) \setminus \{ 2 \}. $ By a similar computation, it can be shown that $\mathbb{W}= \underbrace{\mathbb{{Y}} \oplus \mathbb{{Y}}\oplus \ldots \oplus \mathbb{Y}}_{r-times}$ is an anti-coproximinal subspace of $\ell_p^{rn}.$ \\
	
\end{example}

It is easy to note from Example \ref{example} that for every $ p \in (1, \infty) \setminus \{ 2 \} $ and for every $ n>2,$ there exists an anti-coproximinal subspace of $\ell_p^n.$ Combining  with the well known fact that \emph{every closed subspace of a Hilbert space is coproximinal} (\cite[Lemma. 1]{FF}), we can characterize the Hilbert space $\ell_2^n$ among the $\ell_p^n$ spaces.

\begin{theorem}
	Let $\mathbb{X} = \ell_p^n,$ where $ 1 < p < \infty$ and $ n \geq 3.$ Then $p=2$ if and only if there does not exist any anti-coproximinal subspace in $\mathbb{X}.$ 
\end{theorem}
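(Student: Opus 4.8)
The plan is to prove the two implications separately, and in both directions the work is essentially already done: the forward direction rests on the classical fact that every closed subspace of a Hilbert space is coproximinal \cite[Lemma~1]{FF}, while the reverse direction is supplied by the explicit construction in Example \ref{example}.

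First I would establish that $p=2$ implies the nonexistence of an anti-coproximinal subspace. When $p=2$ the space $\mathbb{X}=\ell_2^n$ is a Hilbert space, and since every subspace of a finite-dimensional space is automatically closed, \cite[Lemma~1]{FF} guarantees that every subspace $\mathbb{Y}$ of $\mathbb{X}$ is coproximinal, i.e., $\mathcal{R}_\mathbb{Y}(x)\neq\emptyset$ for every $x\in\mathbb{X}$. Now let $\mathbb{Y}$ be any proper subspace (recall that, by the convention fixed in the introduction, anti-coproximinality is considered only for proper subspaces of dimension strictly greater than one, and such subspaces exist precisely because $n\geq 3$). Choosing any $x\in\mathbb{X}\setminus\mathbb{Y}$, coproximinality produces a best coapproximation to $x$ out of $\mathbb{Y}$, which directly contradicts the defining property of an anti-coproximinal subspace. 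Hence no subspace of $\ell_2^n$ is anti-coproximinal.

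For the converse I would argue by contraposition, showing that $p\neq 2$ forces the existence of an anti-coproximinal subspace; the desired implication (no anti-coproximinal subspace $\Rightarrow p=2$) then follows at once. This is immediate from Example \ref{example}: for every $p\in(1,\infty)\setminus\{2\}$ and every $n\geq 3$, the hyperspace $\mathbb{Y}=span\{\widetilde{x}_1,\ldots,\widetilde{x}_{n-1}\}$ constructed there has $dim(span~\mathcal{J}_\mathbb{Y})=n$, and therefore, since $\ell_p^n$ is smooth for $1<p<\infty$, Corollary \ref{cor} shows that $\mathbb{Y}$ is anti-coproximinal in $\ell_p^n$. Combining the two implications yields the stated equivalence.

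There is no genuine technical obstacle here once the two prior results are in hand; the proof is a short assembly step. The only point demanding a moment of care is to record why the hypothesis $n\geq 3$ is used on both sides of the equivalence: it ensures, in the forward direction, that proper subspaces of dimension greater than one actually exist so that the notion of anti-coproximinality is non-vacuous, and, in the reverse direction, that the specific construction of Example \ref{example} is available.
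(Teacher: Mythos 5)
Your proposal is correct and follows exactly the route the paper intends: the forward direction from \cite[Lemma~1]{FF} (every closed subspace of a Hilbert space is coproximinal) and the reverse direction by contraposition from the hyperspace constructed in Example \ref{example} together with Corollary \ref{cor}. The paper presents this theorem as an immediate consequence of precisely these two ingredients, so there is no substantive difference in approach.
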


The previous result can be further extended to characterize the Hilbert space among smooth Banach spaces having dimension at least $ 3. $

\begin{theorem}
	Let $\mathbb{X}$ be a smooth Banach space and let $dim(\mathbb{X}) \geq 3.$ Then $\mathbb{X}$ is Hilbert space if and only if there does not exist any anti-coproximinal  closed hyperspace in $\mathbb{X}$.
\end{theorem}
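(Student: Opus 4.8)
The plan is to prove both directions by leveraging the already-established machinery, chiefly Theorem~\ref{anticoproximinal}, and the classical characterization of Hilbert spaces via the coproximinality (equivalently, $1$-complementability) of all hyperspaces. The reverse direction is essentially immediate: if $\mathbb{X}$ is a Hilbert space, then every closed subspace is coproximinal by \cite[Lemma~1]{FF}, so in particular no closed hyperspace can be anti-coproximinal (a coproximinal proper subspace admits a best coapproximation to every point, whereas an anti-coproximinal one admits none for $x\notin\mathbb{Y}$). Thus the nonexistence of anti-coproximinal closed hyperspaces holds. For this direction I only need to record that coproximinality and anti-coproximinality are mutually exclusive for a proper subspace of dimension at least two, which is clear from the definitions.

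The forward direction is the substantive one and will be proved contrapositively: assuming $\mathbb{X}$ is a smooth Banach space of dimension at least $3$ that is \emph{not} Hilbert, I will construct an anti-coproximinal closed hyperspace. The strategy is to reduce to the finite-dimensional picture of Example~\ref{example} and Corollary~\ref{cor}, but since $\mathbb{X}$ need not be finite-dimensional I instead work directly with Theorem~\ref{anticoproximinal}: a closed hyperspace $\mathbb{Y}$ is anti-coproximinal if and only if $\overline{span~\mathcal{J}_{\mathbb{Y}}}^{w^*}=\mathbb{X}^*$. Here I would invoke the Blaschke–Kakutani / Kakutani characterization: a Banach space of dimension at least $3$ is a Hilbert space if and only if every $2$-dimensional subspace is the range of a norm-$1$ projection, equivalently if and only if Birkhoff-James orthogonality is symmetric. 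Since $\mathbb{X}$ is not Hilbert and $\dim\mathbb{X}\ge 3$, there exist vectors witnessing the failure of this symmetry; the aim is to package such vectors into a hyperspace whose support functionals $\mathcal{J}_{\mathbb{Y}}$ have weak*-dense span, exactly as the concrete functionals $\widetilde{f}_1,\dots,\widetilde{f}_n$ spanned $(\ell_p^n)^*$ in Example~\ref{example}.

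Concretely, I would first treat the case $\dim\mathbb{X}=3$, where the preceding Theorem already gives the statement for $\ell_p^3$; the task is to argue that the geometric obstruction to being Hilbert in dimension $3$ always permits the construction of a hyperspace $\mathbb{Y}$ with $span~\mathcal{J}_{\mathbb{Y}}=\mathbb{X}^*$, so that $\mathbb{Y}$ is anti-coproximinal by Corollary~\ref{cor}. Because $\mathbb{X}$ is smooth, each $y\in S_{\mathbb{Y}}$ has a unique support functional $f_y$, and $\mathcal{J}_{\mathbb{Y}}=\{f_y: y\in S_{\mathbb{Y}}\}$; the requirement $\overline{span~\mathcal{J}_{\mathbb{Y}}}^{w^*}=\mathbb{X}^*$ fails exactly when all these support functionals lie in a proper weak*-closed subspace, which (by the annihilator computation in the proof of Theorem~\ref{anticoproximinal}) happens precisely when $\bigcap_{f\in\mathcal{J}_{\mathbb{Y}}}\ker f\neq\{0\}$, i.e.\ when there is a nonzero $z$ with $\mathbb{Y}\perp_B z$. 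So the construction amounts to choosing $\mathbb{Y}$ so that \emph{no} nonzero vector is Birkhoff-James orthogonal to all of $\mathbb{Y}$, and it is precisely the failure of orthogonality symmetry in a non-Hilbert space that makes such a choice possible. For general (possibly infinite-dimensional) $\mathbb{X}$, I would isolate a suitable $3$-dimensional smooth subspace on which Hilbertian structure fails, build the anti-coproximinal hyperspace there, and extend it to a closed hyperspace of $\mathbb{X}$ while preserving the weak*-density of the span of its support functionals.

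The main obstacle I anticipate is the passage from "not Hilbert" to a \emph{constructive} hyperspace with weak*-dense support-functional span, in the generality of arbitrary smooth Banach spaces. In the finite-dimensional polyhedral or $\ell_p^n$ setting this is a transparent linear-algebra computation (as in Example~\ref{example}), but in the infinite-dimensional case I must ensure that the span of countably or uncountably many unique support functionals exhausts $\mathbb{X}^*$ in the weak*-topology, and that the extension of the local construction to a global closed hyperspace does not reintroduce a nonzero common kernel vector. Controlling this extension — and verifying that the relevant $2$- or $3$-dimensional Hilbertian obstruction genuinely transfers to the desired hyperspace — is where the delicate work lies; the reduction to Theorem~\ref{anticoproximinal} and the orthogonality-symmetry characterization of Hilbert spaces are the two levers that should make it tractable.
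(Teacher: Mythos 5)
Your reverse direction (Hilbert $\Rightarrow$ no anti-coproximinal closed hyperspace) is fine and coincides with the paper's, which simply cites the Franchetti--Furi result that every closed subspace of a Hilbert space is coproximinal. Your reduction of the forward direction is also on the right track: in a smooth space, Theorem \ref{anticoproximinal} (together with the annihilator computation in its proof) shows that a closed hyperspace $\mathbb{Y}$ fails to be anti-coproximinal precisely when there is a nonzero $z$ with $\mathbb{Y}\perp_B z$. So the theorem reduces to: $\mathbb{X}$ (with $\dim\mathbb{X}\ge 3$) is an inner product space if and only if every closed hyperspace is Birkhoff--James orthogonal to some nonzero vector.

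The gap is that you never close this reduction. That last equivalence is not something to be re-derived from the Kakutani projection criterion or from the symmetry of Birkhoff--James orthogonality by constructing an explicit hyperspace; it is \emph{literally} a theorem of James (\cite[Th.~4]{J2}: a normed space of dimension at least three is an inner product space if and only if for each hyperspace $H$ there exists $x$ with $H\perp_B x$), and the paper finishes the proof in one line by citing it. Your proposed contrapositive route --- isolate a non-Hilbertian $3$-dimensional subspace, build a hyperplane there with weak*-dense span of support functionals, then extend to a closed hyperspace of $\mathbb{X}$ without reintroducing a common kernel vector --- is exactly the ``delicate work'' you flag as unresolved, and the passage from failure of orthogonality symmetry to the existence of such a hyperspace is itself a nontrivial assertion that you do not prove. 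As written, the forward direction is an outline of a harder alternative proof rather than a proof; replacing the final step with an appeal to James's hyperplane characterization (applied directly: take any closed hyperspace, it is not anti-coproximinal by hypothesis, hence $\mathbb{Y}\perp_B z$ for some nonzero $z$, hence James applies) repairs it and recovers the paper's argument.
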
 

\begin{proof}
	Since the necessary part  follows from \cite[Th. 1]{FF}, we only prove the sufficient part. Suppose that $\mathbb{Y}$ is a closed hyperspace of $\mathbb{X}.$ Since $\mathbb{Y}$ is not an anti-coproximinal subspace of $\mathbb{X}$, it follows from Theorem \ref{anticoproximinal} that $\overline{span~\mathcal{J}_\mathbb{Y}}^{w^*} \subsetneq \mathbb{X}^*.$ Now following  similar arguments  as given in the proof of the necessary part of Theorem \ref{anticoproximinal}, we observe that there exists an element $z \in \mathbb{X} \setminus \mathbb{Y} $ such that $\mathbb{Y} \perp_B z.$ Following \cite[Th. 4]{J2}, we conclude that $\mathbb{X}$ is a Hilbert space. 
\end{proof}

\begin{remark}
	We recall that every one-dimensional subspace of a Banach space is coproximinal. Therefore, it is clear that the previous result is not valid for $ n = 2. $
\end{remark}

 Our next goal is to separately present a necessary condition and a sufficient condition for strongly anti-coproximinal subspaces of a Banach space. First we give the sufficient condition.

	\begin{theorem}\label{sufficient; strong}
		Let $ \mathbb{Y} $ be a subspace of a Banach space $ \mathbb{X}. $ Then $\mathbb{Y}$ is  a strongly anti-coproximinal subspace of $\mathbb{X}$ if for each  $x \in \mathbb{X} \setminus \mathbb{Y},$  there exists a $y \in \mathbb{Y}$ such that $J(y) \subseteq J(x) \cup J(-x).$
	\end{theorem}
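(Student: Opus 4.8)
The plan is to argue by contradiction, exploiting the fact that every functional in $J(z) \cup J(-z)$ norms $z$ up to sign, so that $|f(z)| = \|z\|$ for any such $f$ — a value incompatible with the approximate orthogonality bound whenever $\epsilon < 1$.

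First I would suppose that $\mathbb{Y}$ is \emph{not} strongly anti-coproximinal. By Definition \ref{epsilon}(ii), this produces an element $x \in \mathbb{X}\setminus\mathbb{Y}$, a scalar $\epsilon \in [0,1)$, and an $\epsilon$-best coapproximation $y_0 \in \mathbb{Y}$ to $x$, so that $y \perp_B^{\epsilon} (x-y_0)$ for every $y \in \mathbb{Y}$. Writing $z = x - y_0$, I note that $z \neq 0$ and in fact $z \in \mathbb{X}\setminus\mathbb{Y}$, since $x \notin \mathbb{Y}$ while $y_0 \in \mathbb{Y}$ (if $z$ were in $\mathbb{Y}$ then $x = z + y_0 \in \mathbb{Y}$).

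The key move is to apply the hypothesis not to $x$ but to the residual $z = x - y_0$. Since $z \in \mathbb{X}\setminus\mathbb{Y}$, the assumption yields a (necessarily nonzero) $y^{*} \in \mathbb{Y}$ with $J(y^{*}) \subseteq J(z) \cup J(-z)$. Because $y^{*} \in \mathbb{Y}$, we have $y^{*} \perp_B^{\epsilon} z$, and applying Theorem \ref{chm} (with the roles of its $x$ and $y$ played by $y^{*}$ and $z$) furnishes a functional $f \in J(y^{*})$ satisfying $|f(z)| \leq \epsilon\|z\|$.

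Finally I would observe that $f \in J(y^{*}) \subseteq J(z) \cup J(-z)$ forces $|f(z)| = \|z\|$: if $f \in J(z)$ then $f(z) = \|z\|$, while if $f \in J(-z)$ then $f(z) = -\|-z\| = -\|z\|$, so in either case $|f(z)| = \|z\|$. Combining the two estimates gives $\|z\| \leq \epsilon\|z\|$, which is absurd since $\|z\| > 0$ and $\epsilon < 1$. This contradiction completes the proof. I do not expect any serious obstacle; the only points needing care are recognizing that the hypothesis must be invoked at the residual $z = x-y_0$ rather than at $x$ itself, and confirming that the $y^{*}$ it returns is nonzero so that $J(y^{*})$ is defined and Theorem \ref{chm} is applicable.
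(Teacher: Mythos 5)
Your proposal is correct and follows essentially the same route as the paper's proof: contradiction via an $\epsilon$-best coapproximation $y_0$, applying the hypothesis to the residual $z=x-y_0$, and using Theorem \ref{chm} (the paper routes this through Proposition \ref{approximate}(iii), which is the same fact) to produce a functional $f\in J(y^*)$ with $|f(z)|\le\epsilon\|z\|<\|z\|$, incompatible with $f\in J(z)\cup J(-z)$. No gaps.
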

	
	\begin{proof}
		Suppose on the contrary that $\mathbb{Y}$ is not a strongly anti-coproximinal subspace of $\mathbb{X}.$ Therefore, there exists an $x \in \mathbb{X}\setminus \mathbb{Y}$ such that $y_1 \in \mathbb{Y}$ is an $\epsilon$-best coapproximation to $x$ out of $\mathbb{Y},$ for some $\epsilon \in [0,1).$ Applying Proposition \ref{approximate}, we obtain that for each $y \in \mathbb{Y},$ there exists an $f_y \in J(y)$ such that $ |f_y(x - y_1)| \leq \epsilon \|x - y_1\|< \|x-y_1\|.$  Therefore,  for each $y \in \mathbb{Y},$ there exists an $f_y \in J(y),$ such that $f_y \notin J(x - y_1) \cup J(-(x-y_1)).$ This contradicts the hypothesis of  the theorem, thereby finishing the proof. 
	\end{proof}

Let us now present a necessary condition for strongly anti-coproximinal subspaces of a Banach space under some additional nice conditions.

\begin{theorem}\label{necessary; strong}
	Let $ \mathbb{X} $ be a reflexive  Banach space and let $\mathbb{X}^*$ satisfies the Kadets-Klee Property. Let $\mathbb{Y}$ be a closed subspace of $\mathbb{X}.$ If $\mathbb{Y}$ is a  strongly anti-coproximinal subspace of $\mathbb{X}$ then for each  $x \in \mathbb{X} ,$  there exists an element  $y \in \mathbb{Y}$ such that $J(y) \cap J(x) \neq \emptyset.$
\end{theorem}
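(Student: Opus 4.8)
The plan is to prove the contrapositive. Suppose the conclusion fails: there is some $x \in \mathbb{X}$ with $J(y) \cap J(x) = \emptyset$ for every $y \in \mathbb{Y}$. I will exhibit an $\epsilon$-best coapproximation to $x$ (in fact $0$) for some $\epsilon \in [0,1)$, contradicting strong anti-coproximinality. First observe that necessarily $x \notin \mathbb{Y}$, since $x \in \mathbb{Y}\setminus\{0\}$ would be ruled out by taking $y = x$. Next, because $y$ ranges over $\mathbb{Y}$ together with $-y$ and $J(-y) = -J(y)$, the hypothesis is equivalent to $J(y) \cap \big(J(x) \cup J(-x)\big) = \emptyset$ for all $y$; hence $|f(x)| < \|x\|$ for every $y \in S_\mathbb{Y}$ and every $f \in J(y)$.

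By Theorem \ref{chm} and Proposition \ref{approximate}, $0$ is an $\epsilon$-best coapproximation to $x$ exactly when
\[
	s := \sup_{y \in S_\mathbb{Y}}\ \min_{f \in J(y)} \frac{|f(x)|}{\|x\|} \le \epsilon,
\]
where the inner minimum is attained since $J(y)$ is weak*-compact and $f \mapsto |f(x)|$ is weak*-continuous. So the theorem reduces to the single claim $s < 1$, which I would argue by contradiction, assuming $s = 1$. I would then pick $y_n \in S_\mathbb{Y}$ and $f_n \in J(y_n)$ realizing the inner minimum so that $|f_n(x)| \to \|x\|$. Reflexivity (bounded sequences admit weakly convergent subsequences) lets me pass to subsequences with $y_n \overset{w}{\to} y_0$ and $f_n \overset{w^*}{\to} f_0$; since $\mathbb{Y}$ is a closed subspace it is weakly closed, so $y_0 \in \mathbb{Y}$ with $\|y_0\| \le 1$, and $f_0 \in B_{\mathbb{X}^*}$. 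Weak*-convergence gives $f_n(x) \to f_0(x)$, so $|f_0(x)| = \|x\|$, which forces $\|f_0\| = 1$ and $f_0 \in J(x) \cup J(-x)$.

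The hard part — and the only place where the hypotheses on $\mathbb{X}^*$ are really used — is to show $f_0 \in J(y_0)$, i.e.\ to pass to the limit in the pairing $f_n(y_n)$, a product of a weak*-convergent and a weakly convergent sequence, which need not converge to $f_0(y_0)$ in general. Here I would invoke the Kadets-Klee Property: since $\mathbb{X}$ is reflexive the weak and weak* topologies on $\mathbb{X}^*$ coincide, so $f_n \overset{w}{\to} f_0$, and as $\|f_n\| = 1 = \|f_0\|$ the Kadets-Klee Property upgrades this to norm convergence $f_n \to f_0$. Then $f_n(y_n) = f_0(y_n) + (f_n - f_0)(y_n) \to f_0(y_0)$, because $f_0(y_n) \to f_0(y_0)$ and $|(f_n-f_0)(y_n)| \le \|f_n - f_0\| \to 0$; since $f_n(y_n) = 1$ we conclude $f_0(y_0) = 1$, whence $\|y_0\| = 1$ and $f_0 \in J(y_0)$.

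Finally, if $f_0 \in J(x)$ then $f_0 \in J(y_0) \cap J(x)$, while if $f_0 \in J(-x)$ then $-f_0 \in J(-y_0) \cap J(x)$ with $-y_0 \in \mathbb{Y}$; either way the standing hypothesis is contradicted, forcing $s < 1$. Taking $\epsilon = s$ then makes $0$ an $\epsilon$-best coapproximation to $x \in \mathbb{X}\setminus\mathbb{Y}$, which is the desired contradiction to strong anti-coproximinality and completes the argument. I expect the limiting step for $f_n(y_n)$ to be the sole delicate point, and it is precisely the one for which reflexivity and the Kadets-Klee Property are indispensable.
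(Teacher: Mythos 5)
Your proposal is correct and follows essentially the same route as the paper's proof: both extract sequences $y_n \in S_{\mathbb{Y}}$ and $f_n \in J(y_n)$ with $|f_n(x)| \to \|x\|$, use reflexivity to obtain weak limits $y$ and $f$, invoke the Kadets--Klee Property to upgrade $f_n \to f$ to norm convergence, and then pass to the limit in $f_n(y_n)=1$ to conclude $f \in J(y)\cap\big(J(x)\cup J(-x)\big)$. The only difference is presentational — you argue the contrapositive via the quantity $s$ and an explicit $\epsilon$-best coapproximation by $0$, while the paper argues directly from the failure of $\epsilon_n$-orthogonality for $\epsilon_n \to 1$ — and your write-up is, if anything, more careful about why the limit of the pairing $f_n(y_n)$ exists.
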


\begin{proof}
	Note that whenever $x \in \mathbb{Y},$ we have nothing to prove. Take $x \in \mathbb{X} \setminus \mathbb{Y}.$ 	Since $\mathbb{Y}$ is a strongly anti-coproximinal subspace of $\mathbb{X}, $ it follows that for any $\epsilon \in [0,1),$ $ \mathbb{Y} \not\perp_B^\epsilon x.$ Let us take  a sequence $\{\epsilon_n\}_{n \in \mathbb{N}} \subset [0,1) \to 1,$ as $n \to \infty.$ Suppose that  for each $n \in \mathbb{N},$  there exists $y_n \in S_{\mathbb{Y}}$ such that $y_n \not \perp_B^{\epsilon_n} x.$
		 From Theorem \ref{chm} we obtain that   for any $f_n \in J(y_n),$ $|f_n(x)| > \epsilon_n \|x\|.$  Since $\mathbb{X}$ is reflexive, it follows that $\mathbb{X}^*$ is reflexive and therefore without loss of generality we may and do assume that $f_n$ is weakly convergent to $f,$ for some $f \in B_{\mathbb{{X}}^*}.$ So, $f_n(x) \to f(x).$  Taking limit on the  both sides of the above inequality, we obtain $|f(x)| \geq \|x\|.$ Since $f \in B_{\mathbb{X}^*},$ $|f(x)|=\|x\|,$ and therefore,  $\|f\|=1.$ Thus  either $f \in J(x)$ or $-f \in J(x).$ Also $ \|f_n \| \to \|f\|$ as $ n \to \infty.$  Since $\mathbb{{X}}^*$ satisfies the Kadets-Klee Property, it follows that $f_n \to f$ as $n \to \infty.$ As $\mathbb{X}$ is reflexive and $\mathbb{Y}$ is a closed subspace of $\mathbb{X},$ then $\mathbb{Y}$ is also reflexive, and therefore $B_{\mathbb{Y}}$ is weakly compact. So, $y_n$ weakly converges to $y,$ for some $y \in B_{\mathbb{Y}}.$ Now as $f_n \to f$ and $y_n \overset{w}{\to} y,$ it is straightforward to see that $f(y) =1.$ Therefore, $f \in J(y)$ and consequently,  either $J(y) \cap J(x) \neq \emptyset$ or $J(-y) \cap J(x) \neq \emptyset.$  This completes the theorem.

\end{proof}

\begin{remark}
Observe that the above condition is only necessary but not sufficient, see  Example \ref{not strong}. 
\end{remark}



Applying Theorem \ref{necessary; strong}, it is possible to give examples of Banach spaces which do not contain any strongly anti-coproximinal closed subspaces. 
	
	\begin{theorem}
		Let $ \mathbb{X} $ be a reflexive Banach space and let $\mathbb{X}^*$ satisfies the Kadets-Klee Property. Suppose that $\mathbb{Y}$ is a closed subspace of $\mathbb{X}$ such that there exists a rotund point in $S_\mathbb{X} \setminus S_\mathbb{Y}.$ Then $\mathbb{Y}$ is not a  strongly anti-coproximinal subspace of $\mathbb{X}.$ In particular, every reflexive strictly convex Banach space, whose dual satisfies the Kadets-Klee Property, does not contain any strongly anti-coproximinal closed subspaces.
		
	\end{theorem}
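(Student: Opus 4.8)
The plan is to argue by contraposition, leveraging the necessary condition already established in Theorem \ref{necessary; strong}. That result tells us that if $\mathbb{Y}$ were strongly anti-coproximinal (under exactly the standing hypotheses of reflexivity and the Kadets--Klee Property on $\mathbb{X}^*$), then for \emph{every} $x \in \mathbb{X}$ there would exist some $y \in \mathbb{Y}$ with $J(y) \cap J(x) \neq \emptyset$. Consequently, to prove $\mathbb{Y}$ is \emph{not} strongly anti-coproximinal it suffices to produce a single $x \in \mathbb{X}$ for which this conclusion fails, and the obvious candidate is the given rotund point $x_0 \in S_\mathbb{X} \setminus S_\mathbb{Y}$.

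The key geometric step I would isolate first is the following property of rotund points: if $x_0 \in S_\mathbb{X}$ is rotund and $f \in J(x_0)$, then $x_0$ is the \emph{only} point of $S_\mathbb{X}$ at which $f$ attains its norm. This follows from a one-line convexity computation. If $y \in S_\mathbb{X}$ also satisfies $f(y) = 1$, then $f\left(\frac{x_0 + y}{2}\right) = 1$; since $\|f\| = 1$ this forces $\left\|\frac{x_0+y}{2}\right\| \geq 1$, while the triangle inequality gives $\left\|\frac{x_0+y}{2}\right\| \leq 1$, so $\left\|\frac{x_0+y}{2}\right\| = 1$. The defining property of a rotund point (here with the pair $x_0, y$) then yields $y = x_0$.

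With this lemma available, the main assertion is quick. Suppose for contradiction that $\mathbb{Y}$ is strongly anti-coproximinal. Applying Theorem \ref{necessary; strong} to $x = x_0$ produces a nonzero $y \in \mathbb{Y}$ (nonzero because $J(y) \neq \emptyset$) together with a functional $f \in J(y) \cap J(x_0)$, so in particular $\|f\| = 1$ and $f(x_0) = 1$. Normalizing, $y/\|y\| \in S_\mathbb{Y}$ satisfies $f\!\left(y/\|y\|\right) = 1$, i.e. it norms $f$; the lemma then forces $y/\|y\| = x_0$, whence $x_0 \in S_\mathbb{Y}$, contradicting $x_0 \in S_\mathbb{X} \setminus S_\mathbb{Y}$. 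For the ``in particular'' clause I would first record that strict convexity makes every point of $S_\mathbb{X}$ rotund: if $\|y\| = \left\|\frac{x+y}{2}\right\| = 1$, then $\frac{x+y}{2}$ is an extreme point of $B_\mathbb{X}$ expressed as the midpoint of two unit vectors, which forces $x = y$. Since $\mathbb{Y}$ is a proper closed subspace, $S_\mathbb{X} \setminus S_\mathbb{Y} \neq \emptyset$, and any unit vector in this set is rotund, so the first part of the theorem applies directly.

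The main (indeed essentially the only) obstacle is the rotund-point lemma, and the one point requiring care is verifying that the functional $f$ handed to us by Theorem \ref{necessary; strong} is genuinely norm-one and norms both $x_0$ and, after normalization, $y$, so that ``unique norming point'' can be invoked. Everything else is routine bookkeeping: the reduction of the apparent $J(-y)$ case to $J(y)$ via $-y \in \mathbb{Y}$ is already absorbed into the statement of Theorem \ref{necessary; strong}, and the passage from strict convexity to rotundity of all unit vectors is immediate.
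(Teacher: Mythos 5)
Your proof is correct and follows essentially the same route as the paper: both invoke Theorem \ref{necessary; strong} and then rule out $J(y) \cap J(x_0) \neq \emptyset$ for the rotund point $x_0$ via the identical convexity computation $f\left(\frac{x_0+y}{2}\right)=1 \Rightarrow \left\|\frac{x_0+y}{2}\right\|=1 \Rightarrow x_0=y$. The only difference is that you spell out the normalization of $y$ and the deduction of the ``in particular'' clause, which the paper leaves as immediate.
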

	
	\begin{proof}
		
		Suppose that  $x \in S_\mathbb{X} \setminus S_\mathbb{Y}$ is a rotund point. It is straightforward to see that for any $y \in \mathbb{Y},$ $J(x) \cap J(y) = \emptyset.$ Indeed, if possible let $y_0\in S_\mathbb{Y}$ be such that $f\in J(x) \cap J(y_0).$ This implies that $ f(\frac{x+y_0}{2})=1 \implies \|\frac{x+y_0}{2}\| = 1.$ Since $x$ is rotund, it follows that $x=y_0,$ a contradiction. Now  applying Theorem \ref{necessary; strong}, it can be concluded that $\mathbb{Y}$ is not a strongly anti-coproximinal subspace of $\mathbb{X}.$ This completes the proof of the first part. The second part follows trivially from the first part.
	\end{proof}  
	
	
	Our next result shows that the condition of strict convexity in the previous theorem can be replaced by the condition of smoothness. 
	
	\begin{theorem}\label{smooth}
		Let $ \mathbb{X} $ be a reflexive smooth Banach space and let $\mathbb{X}^*$ satisfies the Kadets-Klee Property. Suppose that  $\mathbb{Y}$ is a closed subspace of $\mathbb{X}.$ Then $\mathbb{Y}$ is not a  strongly anti-coproximinal subspace of $\mathbb{X}$.

	\end{theorem}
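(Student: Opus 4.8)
The plan is to derive the statement directly from the necessary condition already proved in Theorem \ref{necessary; strong}, applied in its contrapositive form. That theorem says that if $\mathbb{Y}$ were strongly anti-coproximinal, then for \emph{every} $x \in \mathbb{X}$ there would exist some $y \in \mathbb{Y}$ with $J(y) \cap J(x) \neq \emptyset$. Hence it suffices to produce a single $x \in \mathbb{X}$ for which $J(x) \cap J(y) = \emptyset$ for all $y \in \mathbb{Y}$. As is our standing convention, $\mathbb{Y}$ is a proper subspace (otherwise there is nothing to prove).

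First I would select the right functional. Since $\mathbb{Y}$ is a proper closed subspace, the Hahn-Banach theorem gives $\mathbb{Y}^{\perp} \neq \{0\}$, so I may pick $f \in S_{\mathbb{X}^*} \cap \mathbb{Y}^{\perp}$. The reflexivity of $\mathbb{X}$ then guarantees, via the weak compactness of $B_{\mathbb{X}}$ (a bounded linear functional is weakly continuous and hence attains its supremum on $B_{\mathbb{X}}$), that $f$ attains its norm: there is some $x \in S_{\mathbb{X}}$ with $f(x) = \|f\| = 1$, so that $f \in J(x)$. Observe in passing that $x \notin \mathbb{Y}$, because $f$ vanishes on $\mathbb{Y}$ while $f(x) = 1$.

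The decisive use of smoothness comes next: since $\mathbb{X}$ is smooth, $J(x)$ is a singleton, and as it contains $f$ we get $J(x) = \{f\}$. Now for any nonzero $y \in \mathbb{Y}$ we have $f(y) = 0 < \|y\|$, so every $g \in J(y)$ satisfies $g(y) = \|y\| > 0 = f(y)$, whence $f \neq g$ and thus $f \notin J(y)$. Consequently $J(x) \cap J(y) = \{f\} \cap J(y) = \emptyset$ for every $y \in \mathbb{Y}$. This exhibits the required $x$, and the contrapositive of Theorem \ref{necessary; strong} yields that $\mathbb{Y}$ is not strongly anti-coproximinal, as desired.

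I do not anticipate a serious obstacle; the one point requiring care is the interplay of the two hypotheses in the middle step. Reflexivity is precisely what realizes the annihilating functional $f$ as a support functional of an honest vector $x$, while smoothness is precisely what collapses $J(x)$ to $\{f\}$ — and this is exactly where the argument diverges from the strictly convex case of the preceding theorem, where one instead invoked a rotund point to handle a possibly multivalued $J(x)$. It is worth noting that the Kadets-Klee hypothesis on $\mathbb{X}^*$ enters only indirectly, through the already-established Theorem \ref{necessary; strong}.
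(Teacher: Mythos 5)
Your proposal is correct and follows essentially the same route as the paper: choose a norm-one functional in $\mathbb{Y}^{\perp}$, use reflexivity to realize it as the (unique, by smoothness) support functional of some $x \in S_{\mathbb{X}}$, observe that it cannot support any nonzero $y \in \mathbb{Y}$, and conclude via the contrapositive of Theorem \ref{necessary; strong}. The only cosmetic difference is that you show directly $f \notin J(y)$ from $f(y)=0<\|y\|$, whereas the paper phrases the same point as a contradiction using the singleton property of $J(y_1)$.
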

	
	\begin{proof}
		Suppose that $\mathbb{Y}$ is a closed subspace of $\mathbb{X}$. 
		Let us consider $  g \in \mathbb{Y}^\perp$ such that $\|g\|=1.$ Since $\mathbb{X}$ is reflexive, there exists a $z \in S_\mathbb{X}$ such that $|g(z)|=1.$ Therefore, $J(z)=\{g\}$ or $J(z)= \{-g\},$ as $\mathbb{X}$ is smooth.  We claim that for any $y \in \mathbb{Y},$  $ J(y) \cap J(z) = \emptyset.$ Otherwise, take a nonzero element $y_1 \in \mathbb{{Y}}$ such that $J(y_1) \cap J(z) \neq \emptyset.$ Since $\mathbb{X}$ is smooth, it follows that either  $J(y_1) =\{g\} $ or   $J(y_1) =\{-g\}.$ Then $|g(y_1)|= \|y_1\|,$ which contradicts the fact that $g\in \mathbb{Y}^{\perp}.$ Therefore, applying Theorem \ref{necessary; strong}, we conclude that $\mathbb{Y}$ is not a strongly anti-coproximinal subspace of $\mathbb{X}$. This completes the theorem.

	\end{proof}

We end this section with the following result, which is an immediate corollary of Theorem \ref{necessary; strong}.
	
	\begin{cor}	Let $\mathbb{Y}$ be a closed subspace of a Banach space $\mathbb{X}.$ Suppose that $\mathbb{X}$ satisfies either of the following properties:
		\begin{itemize}
			\item[(i)] $\mathbb{X}$ is a finite-dimensional smooth Banach space
			\item[(ii)] $\mathbb{X}$ is a finite-dimensional strictly convex Banach space
				\item[(iii)] $\mathbb{X}$ is a uniformly smooth Banach space.
		\end{itemize}
	 Then $\mathbb{Y}$ is not a strongly anti-coproximinal subspace of $\mathbb{X}$.
    \end{cor}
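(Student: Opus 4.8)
The plan is to reduce each of the three cases to one of the two results already established, namely Theorem \ref{smooth} (the smooth case) and the theorem immediately preceding it dealing with the existence of a rotund point in $S_\mathbb{X}\setminus S_\mathbb{Y}$ and, in particular, with strictly convex spaces. In every case the essential task is simply to verify that the two standing hypotheses of Theorem \ref{necessary; strong}---that $\mathbb{X}$ is reflexive and that $\mathbb{X}^*$ satisfies the Kadets--Klee Property---are met, after which the desired conclusion follows at once from the appropriate earlier theorem.

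For (i), a finite-dimensional space is automatically reflexive, and since on a finite-dimensional space the weak and norm topologies coincide, the Kadets--Klee Property holds trivially for $\mathbb{X}^*$: any weakly convergent sequence in $\mathbb{X}^*$ is already norm convergent, so the defining implication is vacuously satisfied. As $\mathbb{X}$ is assumed smooth, Theorem \ref{smooth} applies directly and yields that $\mathbb{Y}$ is not a strongly anti-coproximinal subspace of $\mathbb{X}$.

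For (ii), the space is again reflexive and $\mathbb{X}^*$ satisfies the Kadets--Klee Property by the same finite-dimensional reasoning. Since $\mathbb{X}$ is now assumed strictly convex, I would invoke the ``in particular'' clause of the theorem immediately preceding Theorem \ref{smooth}, which asserts that a reflexive strictly convex space whose dual satisfies the Kadets--Klee Property contains no strongly anti-coproximinal closed subspace. This settles case (ii).

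For (iii), I would first recall the classical facts that a uniformly smooth Banach space is reflexive and smooth, and that its dual $\mathbb{X}^*$ is uniformly convex; moreover, every uniformly convex space satisfies the Kadets--Klee Property, so $\mathbb{X}^*$ does. With reflexivity, smoothness of $\mathbb{X}$, and the Kadets--Klee Property of $\mathbb{X}^*$ all in hand, Theorem \ref{smooth} again applies and completes the argument. The only step demanding genuine care---and the place where I expect the main (though still standard) obstacle---is precisely this chain of duality facts in case (iii): that uniform smoothness of $\mathbb{X}$ forces reflexivity together with the uniform convexity of $\mathbb{X}^*$, and that uniform convexity in turn yields the Kadets--Klee Property. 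These are classical results, so I would cite them explicitly rather than reprove them, since they are the only points in the proof requiring more than a routine verification of hypotheses.
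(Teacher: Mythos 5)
Your proposal is correct and follows exactly the route the paper intends: the paper states this as an immediate consequence of Theorem \ref{necessary; strong} via Theorem \ref{smooth} and the rotund-point theorem, and your verification of reflexivity and the Kadets--Klee Property of $\mathbb{X}^*$ in each case (trivially in finite dimensions, and via uniform convexity of the dual in the uniformly smooth case) is precisely the intended argument.
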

	

\section*{Section-II}
In this section we study the anti-coproximinal and the strongly anti-coproximinal subspaces in finite-dimensional polyhedral Banach spaces. In particular, we obtain characterizations of the said subspaces in $\ell_\infty^n$ and $\ell_1^n,$ which are computationally effective. 
We begin with the following characterization of the anti-coproximinal subspaces of a finite-dimensional polyhedral Banach space, provided that  $ Sm(\mathbb{X}) \cap\mathbb{{Y}} $ is dense in $\mathbb{{Y}}.$

\begin{theorem}\label{theorem}
		Let $ \mathbb{Y}$ be a subspace of an $n$-dimensional polyhedral Banach space $\mathbb{X}$ such that $ Sm(\mathbb{X}) \cap\mathbb{Y} $ is dense in $\mathbb{{Y}}.$ Then $ \mathbb{Y}$ is an anti-coproximinal subspace of $\mathbb{X}$ if and only if there are $n$-linearly independent elements in $\mathcal{J}_\mathbb{Y}.$
	\end{theorem}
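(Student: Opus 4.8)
The plan is to prove both implications through Theorem \ref{PS}, after two preliminary reductions. First, since $\dim \mathbb{X}^* = n,$ the existence of $n$ linearly independent elements in $\mathcal{J}_\mathbb{Y}$ is equivalent to $span~\mathcal{J}_\mathbb{Y} = \mathbb{X}^*,$ and this is the form in which I would use the condition. Second, because $\mathbb{X}$ is polyhedral, $B_{\mathbb{X}^*}$ is a polytope, so $Ext(B_{\mathbb{X}^*})$ is finite; since $\mathcal{J}_\mathbb{Y} \subseteq Ext(B_{\mathbb{X}^*}),$ the set $\mathcal{J}_\mathbb{Y}$ is finite. This finiteness, together with the density of $Sm(\mathbb{X}) \cap \mathbb{Y}$ in $\mathbb{Y},$ are the two structural inputs that will replace the smoothness of the whole space used in Theorem \ref{anticoproximinal} and Corollary \ref{cor}.

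For the sufficiency I would assume $span~\mathcal{J}_\mathbb{Y} = \mathbb{X}^*$ and argue by contradiction: if $\mathbb{Y}$ were not anti-coproximinal, there would be an $x \in \mathbb{X} \setminus \mathbb{Y}$ admitting a best coapproximation $y_0 \in \mathbb{Y}.$ Each $f \in \mathcal{J}_\mathbb{Y}$ is by definition the support functional of some $y \in Sm(\mathbb{X}) \cap S_\mathbb{Y},$ so $J(y) = \{f\};$ Theorem \ref{PS} then produces an element of $J(y)$ — necessarily $f$ itself — annihilating $x - y_0,$ whence $f(x - y_0) = 0.$ As this holds for every $f \in \mathcal{J}_\mathbb{Y}$ and these span $\mathbb{X}^*,$ we conclude $x - y_0 = 0,$ contradicting $x \notin \mathbb{Y}.$ I note that this direction uses neither the finiteness of $\mathcal{J}_\mathbb{Y}$ nor the density hypothesis.

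The necessity is the substantial part, and I would prove its contrapositive: if $span~\mathcal{J}_\mathbb{Y} \subsetneq \mathbb{X}^*,$ then $\mathbb{Y}$ is not anti-coproximinal. Here $\bigcap_{f \in \mathcal{J}_\mathbb{Y}} ker~f = {}^{\perp}(span~\mathcal{J}_\mathbb{Y}) \neq \{0\}.$ The first step is to verify that this intersection meets $\mathbb{Y}$ only at $0$: a hypothetical nonzero $y$ in the intersection, normalized, could be approximated by smooth points $y_k \in Sm(\mathbb{X}) \cap S_\mathbb{Y}$ with unique support functionals $f_k \in \mathcal{J}_\mathbb{Y},$ and then $f_k(y) = 0$ while $f_k(y_k) = 1$ forces $|f_k(y_k - y)| = 1,$ contradicting $\|y_k - y\| \to 0.$ Consequently there is a nonzero $z \in \bigcap_{f \in \mathcal{J}_\mathbb{Y}} ker~f$ with $z \in \mathbb{X} \setminus \mathbb{Y}.$ The heart of the argument is to show that $0$ is a best coapproximation to $z,$ i.e., via Theorem \ref{PS}, that every nonzero $y \in \mathbb{Y}$ admits some $f_y \in J(y)$ with $f_y(z) = 0.$ For such $y$ I would once more take normalized smooth points $y_k \to y$; their unique support functionals all lie in the \emph{finite} set $\mathcal{J}_\mathbb{Y},$ so a subsequence is constant, say equal to $f,$ giving $f(y) = \lim f(y_k) = 1,$ hence $f \in J(y),$ while $f(z) = 0$ since $f \in \mathcal{J}_\mathbb{Y}.$ This yields the required functional and contradicts anti-coproximinality. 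The main obstacle is precisely this last step: a non-smooth $y$ may carry a whole face of support functionals, and one must select one that both belongs to $J(y)$ and annihilates $z.$ This is exactly where polyhedrality (finiteness of $\mathcal{J}_\mathbb{Y},$ permitting a constant subsequence) and the density of smooth points in $\mathbb{Y}$ (permitting a smooth approximating sequence reaching $y$) are jointly indispensable, neither sufficing alone.
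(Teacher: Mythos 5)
Your proposal is correct and follows essentially the same route as the paper: the sufficiency is the same span/annihilator argument via Theorem \ref{PS}, and the necessity hinges on the same key lemma that every $y \in S_{\mathbb{Y}}$ satisfies $\mathcal{J}_{\mathbb{Y}} \cap J(y) \neq \emptyset$, proved exactly as you do by approximating with smooth points and using the finiteness of $\mathcal{J}_{\mathbb{Y}}$ (the paper extracts the constant value rather than a constant subsequence, but this is the same pigeonhole step). Your identification of where polyhedrality and density are each indispensable matches the paper's use of these hypotheses.
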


\begin{proof}
	We first prove the sufficient part. Suppose on the contrary that $\mathbb{Y}$ is not an anti-coproximinal subspace of $\mathbb{X}$. Then there exists an element $x \in \mathbb{X}\setminus \mathbb{Y}$ and  a $y_0 \in \mathbb{Y}$ such that $y_0 \in \mathcal{R}_\mathbb{Y}(x).$ Therefore, from Theorem \ref{PS},  it follows  that $x-y_0 \in ker~f,$ for all $f \in \mathcal{J}_\mathbb{Y}.$ Since $\mathcal{J}_\mathbb{Y}$ contains $n$ linearly independent elements and $\dim (\mathbb{X}^*) = n$, it follows that $\cap_{f\in \mathcal{J}_\mathbb{Y}} ker~f = \{0\}.$ Therefore, $x-y_0 = 0,$ i.e., $x=y_0,$ which is a contradiction. This completes the proof of the sufficient part.
	
	To prove the necessary part, we first show that for any $y\in S_\mathbb{Y},$ $\mathcal{J}_\mathbb{Y} \cap J(y) \neq \emptyset.$ Since $Sm(\mathbb{X}) \cap \mathbb{Y}$ is dense in $\mathbb{Y},$ it is immediate that  $Sm(\mathbb{X}) \cap S_\mathbb{Y}$ is dense in $S_\mathbb{Y},$ and therefore, for any $y\in S_\mathbb{Y},$ there exists a sequence $\{y_n\}_{n\in \mathbb{N}} \subset Sm(\mathbb{X}) \cap S_\mathbb{Y}$ such that $y_n \to y.$ Suppose that $J(y_n) = \{f_n\} \subset \mathcal{J}_\mathbb{Y},$ for each $n \in \mathbb{N}.$ Therefore, it is easy to see that $f_n(y) \to 1.$ Since $\mathbb{X}$ is polyhedral and $\mathcal{J}_\mathbb{Y} \subset Ext(B_{\mathbb{X}^*})$, it follows that  $\mathcal{J}_\mathbb{Y}$ is finite. Moreover, since for each $n \in \mathbb{N},$ $f_n\in \mathcal{J}_\mathbb{Y}$ and $\mathcal{J}_\mathbb{Y}$ is finite, we get that for some  $f \in \mathcal{J}_\mathbb{Y},$ $f(y) =1.$ Thus $f \in \mathcal{J}_\mathbb{Y} \cap J(y). $   If possible, suppose that $\mathcal{J}_\mathbb{Y}$ contains exactly $k$ linearly independent elements such that $k < n.$ Let us assume that $(\neq 0) x \in \cap_{f\in \mathcal{J}_\mathbb{Y}} ker~f.$ Clearly, $x \notin \mathbb{Y}.$ Otherwise, from the above arguments $f(x)=1,$ for some $f \in \mathcal{J}_\mathbb{Y}.$ Since for each $y\in S_\mathbb{Y},$ $\mathcal{J}_\mathbb{Y} \cap J(y) \neq \emptyset,$ it follows  that there exists an $f \in J(y)$ such that $f(x)=0.$ It is immediate from Theorem \ref{PS} that $0 $ is a best coapproximation to $x$ out of $\mathbb{Y},$ which is not possible  since $\mathbb{Y}$ is an anti-coproximinal subspace of $\mathbb{X}$. This completes the proof of the necessary part. Hence the theorem.
\end{proof}

We note that the sufficient part of the above theorem holds true for any finite-dimensional Banach space $\mathbb{X}$ without any additional assumptions. In particular, this implies that \emph{ if $\mathbb{Y}$ is a subspace of an $n$-dimensional Banach space $\mathbb{X}$ such that there are $n$-linearly independent elements in $\mathcal{J}_{\mathbb{Y}},$ then $\mathbb{Y}$ is an anti-coproximinal subspace of $\mathbb{X}.$} 

We next characterize the strongly anti-coproximinal subspaces in finite-dimensional polyhedral Banach spaces.



\begin{theorem}\label{extreme}
	Let $\mathbb{Y}$ be a subspace of a finite-dimensional polyhedral Banach space $\mathbb{X}.$  Then the following statements are equivalent:
	\begin{itemize}
		\item[(i)] $\mathbb{Y}$ is a strongly anti-coproximinal subspace of $\mathbb{X}$
		\item[(ii)] $ \mathbb{{Y}}$ intersects the interior of every  facet of $ B_\mathbb{X}$
		\item[(iii)]  $\mathcal{J}_{\mathbb{Y}} = Ext(B_{\mathbb{X}^*}).$
	\end{itemize}
	
\end{theorem}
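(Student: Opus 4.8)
The plan is to run both equivalences through the standard polarity between the polytopes $B_{\mathbb{X}}$ and $B_{\mathbb{X}^*}$. First I would record the two structural facts that drive everything: in a finite-dimensional polyhedral space a point $x \in S_{\mathbb{X}}$ is smooth if and only if it lies in the relative interior $int(F)$ of a (necessarily unique) facet $F$ of $B_{\mathbb{X}}$, and the assignment $f \mapsto F_f := \{x \in S_{\mathbb{X}} : f(x) = 1\}$ is a bijection between $Ext(B_{\mathbb{X}^*})$ and the facets of $B_{\mathbb{X}}$. Combining these, an extreme functional $f$ lies in $\mathcal{J}_{\mathbb{Y}}$ exactly when $\mathbb{Y}$ meets $int(F_f)$: if $y \in Sm(\mathbb{X}) \cap S_{\mathbb{Y}}$ has $f(y) = 1$ then $y$ is an interior point of $F_f$, and conversely any such interior point is a smooth point of $S_{\mathbb{Y}}$ witnessing $f \in \mathcal{J}_{\mathbb{Y}}$. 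Since $\mathcal{J}_{\mathbb{Y}} \subseteq Ext(B_{\mathbb{X}^*})$ always and facets correspond bijectively to extreme points of $B_{\mathbb{X}^*}$, this yields $(ii) \Leftrightarrow (iii)$ immediately.

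To attach these to strong anti-coproximinality I would first reduce the definition to a statement about a single direction. Writing $z = x - y_0$ and invoking Proposition \ref{approximate} (equivalently Theorem \ref{chm}), one sees that $\mathbb{Y}$ is strongly anti-coproximinal if and only if there is no $z \in \mathbb{X} \setminus \mathbb{Y}$ and no $\epsilon \in [0,1)$ with $\mathbb{Y} \perp_B^{\epsilon} z$; indeed $\mathbb{Y} \perp_B^{\epsilon} z$ says precisely that $0$ is an $\epsilon$-best coapproximation to $z$, and conversely any $\epsilon$-best coapproximation $y_0$ to some $x \notin \mathbb{Y}$ produces the witness $z = x - y_0 \notin \mathbb{Y}$.

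For $(iii) \Rightarrow (i)$ I would argue directly. Fix $z \in \mathbb{X} \setminus \mathbb{Y}$ and $\epsilon \in [0,1)$. In finite dimensions the norm is attained on $Ext(B_{\mathbb{X}^*})$, so there is $g \in Ext(B_{\mathbb{X}^*})$ with $|g(z)| = \|z\|$. By $(iii)$ we have $g \in \mathcal{J}_{\mathbb{Y}}$, so there is a smooth $y^* \in S_{\mathbb{Y}}$ with $J(y^*) = \{g\}$. Then every $f \in J(y^*)$ satisfies $|f(z)| = \|z\| > \epsilon \|z\|$, whence $y^* \not\perp_B^{\epsilon} z$ by Theorem \ref{chm} and so $\mathbb{Y} \not\perp_B^{\epsilon} z$; by the reduction this is exactly $(i)$.

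The substance of the argument, and the step I expect to be the main obstacle, is the contrapositive of $(i) \Rightarrow (iii)$. Assuming $g \in Ext(B_{\mathbb{X}^*}) \setminus \mathcal{J}_{\mathbb{Y}}$, a short symmetry check gives $-g \notin \mathcal{J}_{\mathbb{Y}}$, so by the correspondence $\mathbb{Y}$ misses both $int(F_g)$ and $int(F_{-g})$. I would choose a witness $z \in int(F_g)$, which is smooth with $J(z) = \{g\}$ and lies outside $\mathbb{Y}$, and set $\epsilon_0 := \max\{|f(z)| : f \in Ext(B_{\mathbb{X}^*}),\ f \neq \pm g\}$. The crucial point is $\epsilon_0 < 1$: for such $f$, $f(z) = 1$ would force $f \in J(z) = \{g\}$ and $f(z) = -1$ would force $-f \in J(z) = \{g\}$, both excluded, so each $|f(z)| < 1$ and the maximum of finitely many such numbers remains below $1$. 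Finally, for an arbitrary $y \in S_{\mathbb{Y}}$ I must exhibit a support functional avoiding $\pm g$: the extreme points of the face $J(y)$ are exactly the $f \in Ext(B_{\mathbb{X}^*})$ with $f(y) = 1$, and they cannot all lie in $\{g, -g\}$, since $g$ and $-g$ cannot simultaneously support $y$ and the only remaining possibilities $J(y) = \{g\}$ or $J(y) = \{-g\}$ would put $y$ in $int(F_{\pm g}) \cap \mathbb{Y} = \emptyset$. Thus some $f_y \in J(y)$ has $f_y \neq \pm g$ and $|f_y(z)| \le \epsilon_0 = \epsilon_0 \|z\|$, giving $\mathbb{Y} \perp_B^{\epsilon_0} z$ with $z \notin \mathbb{Y}$, so $\mathbb{Y}$ is not strongly anti-coproximinal. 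The delicate part throughout is securing a single uniform $\epsilon_0 < 1$ valid for every $y$ simultaneously, which is precisely what the finiteness of $Ext(B_{\mathbb{X}^*})$ and the polyhedral facial structure provide.
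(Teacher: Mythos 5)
Your proposal is correct and follows essentially the same route as the paper: the facet/extreme-functional duality for $(ii)\Leftrightarrow(iii)$, the norm-attaining extreme functional together with Theorem \ref{chm} (which is the content of Theorem \ref{sufficient; strong}) for $(iii)\Rightarrow(i)$, and the same uniform $\epsilon_0=\max\{|f(z)|: f\in Ext(B_{\mathbb{X}^*})\setminus\{\pm g\}\}<1$ construction for the contrapositive of $(i)\Rightarrow(iii)$, which the paper phrases as $(i)\Rightarrow(ii)$. The only difference is the organization of the cycle of implications, not the mathematics.
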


\begin{proof}
	Suppose that  $\pm F_1, \pm F_2, \ldots, \pm F_r$ are the facets  of $ B_{\mathbb{{X}}}.$  Following from  \cite[Lemma 2.1]{SPBB}, assume that $\pm f_1, \pm f_2, \ldots, \pm f_r$ are the corresponding extreme points of $B_{\mathbb{X}^*},$ respectively. Clearly, $Ext(B_{\mathbb{X}^*})= \{\pm f_1, \pm f_2, \ldots, \pm f_r\}.$ We complete the proof in the following three steps:\\
	(i) $\implies$ (ii): Suppose on the contrary that $\mathbb{Y}$ does not intersect the interior of the facet $F_i,$ for some $i \in \{1, 2, \ldots, r\}.$ Take $x \in int(F_i).$ Clearly, $x \notin \mathbb{Y}.$ Moreover, $x$ is smooth and $J(x)= \{f_i\}.$ Since $\mathbb{Y}$ does not intersect the interior of $F_i,$ it is easy to observe that for any $y \in \mathbb{Y},$ $J(y) \cap (Ext(B_{\mathbb{X}^*}) \setminus \{ \pm f_i\}) \neq \emptyset,$ otherwise, $J(y)=\{f_i\}.$ Take $\epsilon_0= \max\{ |f(x)|: f \in Ext(B_{\mathbb{X}^*}) \setminus \{ \pm f_i\}\}.$ As $J(x)=\{f_i\},$ it is clear that $\epsilon_0 <1.$ Since for any $y \in \mathbb{Y},$ $J(y) \cap (Ext(B_{\mathbb{X}^*}) \setminus \{ \pm f_i\}) \neq \emptyset,$ it is easy to see that for any $y \in \mathbb{Y}$ there exists an $f \in J(y)$ such that $|f(x)| \leq \epsilon_0.$ Following Theorem \ref{chm},  $y \perp_B^ {\epsilon_0} x,$ for any $y \in \mathbb{Y}.$  In other words, $0$ is an $\epsilon_0$-best coapproximation to $x$ out of $\mathbb{Y}.$ This contradicts that $\mathbb{Y}$ is a strongly anti-coproximinal subspace of $\mathbb{X}.$\\
	(ii) $\implies $ (iii): Suppose that $y_i \in int(F_i) \cap \mathbb{Y},$ for each $i \in \{1, 2, \ldots, r\}$. Clearly, $y_i$ is smooth and  $J(y_i)= \{f_i\}.$ Therefore, $f_i \in \mathcal{J}_{\mathbb{Y}}$,  for each $i \in \{1, 2, \ldots, r\},$ this implies that $Ext(B_{\mathbb{X}^*}) \subset \mathcal{J}_{\mathbb{Y}}.$ So, $Ext(B_{\mathbb{X}^*}) = \mathcal{J}_{\mathbb{Y}}.$ \\
(iii) $\implies$ (i): Let $x \in \mathbb{X}.$ Without loss of generality we assume that $x \in F_i,$ for some $i \in  \{1, 2, \ldots, r\}.$ Clearly, $f_i \in J(x).$ Since $\mathcal{J}_{\mathbb{Y}}= Ext(B_{\mathbb{X}^*}),$ there exists a $y \in Sm(\mathbb{X}) \cap \mathbb{Y}$ such that $J(y)=\{f_i\}.$ Therefore, $J(y) =\{f_i\} \subseteq J(x).$  By applying Theorem \ref{sufficient; strong},   we obtain that $\mathbb{Y}$ is a strongly anti-coproximinal subspace of $\mathbb{X}$.
	\end{proof}

 In the following example we explicitly show the applicability of the previous two theorems.

\begin{example}\label{example2}
	Let $ \mathbb{X}$ be the $3$-dimensional Banach space, where 
	$ B_{\mathbb{{X}}} $ is a hexagonal right prism with vertices $\pm (1, 0, 1),  \pm (- 1, 0,  1), \pm (\frac{1}{2}, \frac{1}{2}, 1), \pm (-\frac{1}{2}, \frac{1}{2}, 1), \pm (-\frac{1}{2}, -\frac{1}{2}, 1),$ $ \pm (\frac{1}{2}, -\frac{1}{2}, 1).$ Let $ f_1(x, y, z)= x+y, ~ f_2(x,y,z)= x-y,~ f_3(x,y,z)= y,~ f_4(x, y, z)= z, $ for any $ (x, y, z) \in \mathbb{{X}}.$ It is easy to observe that $Ext(B_{\mathbb{{X}}^*})=\{\pm f_1, \pm f_2, \pm f_3, \pm f_4\}.$ Let us assume that $\pm F_1, \pm F_2, \pm F_3$ and $\pm F_4 $ are the corresponding facets of $B_{\mathbb{X}}$ of the extreme functionals $\pm f_1, \pm f_2, \pm f_3, \pm f_4,$ respectively \cite[Lemma 2.1]{SPBB}. Clearly, $\pm F_1, \pm F_2, \pm F_3$ are the rectangular facets and $\pm F_4$ are the hexagonal facets of $B_{\mathbb{{X}}}.$ \\
		\textbf{Coproximinal subspace:}  Let $\mathbb{{Y}}= span\{(1, 0, 0), (0, 1, 0)\}.$ It is now easy to observe that for any $y \in \mathbb{Y},$ the elements of $J(y)$ can be written as a convex combination of $ \pm f_1, \pm f_2,  \pm f_3.$ Otherwise, there exists an element $z \in \mathbb{Y}$ such that $f_4 \in J(z),$ which is clearly not possible. Let $\widetilde{x}=(a,b,c) \in \mathbb{X}.$ It is straightforward to observe that $f_i((a,b,c)- (a,b,0))=0,$ for any $i \in \{1,2,3\}.$  Therefore for any $y \in \mathbb{Y}$, there exists $f_y \in J(y)$ such that $f_y( (a,b,c)-(a,b,0))=0,$ and consequently, $\mathbb{Y} \perp_B (a,b,c)-(a,b,0).$ This implies $(a,b,0)$ is a best coapproximation to $(a, b,c)$ out of $\mathbb{Y}.$ Therefore, $\mathbb{Y}$ is a coproximinal subspace of $\mathbb{X}.$ \\  
		\textbf{Anti-coproximinal subspace:} Let us now take $ \mathbb{{Y}}= span\{ ( \frac{3}{4}, -\frac{1}{4}, 1), ( -\frac{3}{4}, -\frac{1}{4}, 1)\}.$
	It is easy to observe that	$Sm(\mathbb{X}) \cap S_{\mathbb{{Y}}}$ is  dense in $ S_{\mathbb{{Y}}}$ and 
	$ \mathbb{{Y}}$ intersects four rectangular facets, $\pm F_1, \pm F_2$ and two hexagonal facets, $ \pm F_4$ of $B_{\mathbb{{X}}}.$ It is evident that $f_1, f_2, f_4 \in \mathcal{J}_\mathbb{Y}.$ As $ f_1, f_2, f_4$ are linearly independent, by applying Theorem \ref{theorem}  we get that $\mathbb{{Y}}$ is an anti-coproximinal subspace of $\mathbb{{X}}.$ \\
		\textbf{Strongly anti-coproximinal subspace:} Let $\mathbb{Y} = span\{(\frac{7}{8}, \frac{1}{8}, 1), (\frac{7}{8} ,-\frac{1}{8}, 1)\}.$ 	 By a straightforward computation we obtain that $(\frac{11}{16}, \frac{5}{16}, \frac{11}{14}) \in int(F_1), (\frac{11}{16}, -\frac{5}{16}, \frac{11}{14}) \in int(F_2), (0, \frac{1}{2}, 0) \in int(F_3)$ and $(\frac{7}{8}, 0, 1) \in int(F_4).$ Therefore, it is easy to see that the subspace $\mathbb{Y}$ intersects the interior of each facet of $B_\mathbb{X}.$ Applying Theorem \ref{extreme}, we conclude that $\mathbb{Y}$ is a strongly anti-coproximinal subspace of $\mathbb{X}.$ 
	
\end{example}
We now study the anti-coproximinal and the strongly anti-coproximinal subspaces of $\ell_\infty^n.$ The best coapproximation problem in $\ell_\infty^n$ was studied in \cite{SSGP} using the concept of the $*$-Property which plays a crucial role in the whole scheme of things. For the convenience of the readers, let us recall the definition of the $*$-Property.

\begin{definition}\cite{SSGP}
	Let $\mathcal{A}=\{\widetilde{a}_1, \widetilde{a}_2, \ldots, \widetilde{a}_m\}$ be a set of linearly independent elements in $\ell_\infty^n,$ where $1 \leq m \leq n$ and $\widetilde{a}_k = (a_1^k, a_2^k, \ldots, a_n^k),$ for each $1\leq k \leq m.$
	\begin{itemize}
		\item[(i)] For each $i\in \{1, 2, \ldots, n\},$ the $i$-th component of $\mathcal{A}$ is defined as $(a_i^1, a_i^2, \ldots, a_i^m).$
		\item[(ii)] The positively associative set $P_i^+(\mathcal{A})$ of the $i$-th component is defined as:
		\[P_i^+(\mathcal{A}) := \{j \in \{1, 2, \ldots, n\} : (a_i^1, a_i^2, \ldots, a_i^m) = (a_j^1, a_j^2, \ldots, a_j^m)\}.\]
		Similarly, the negatively associated set $P_i^-(\mathcal{A})$ is defined as:
		\[P_i^-(\mathcal{A}) := \{j \in \{1, 2, \ldots, n\} : (a_i^1, a_i^2, \ldots, a_i^m) = -(a_j^1, a_j^2, \ldots, a_j^m)\}.\] We write $P_i^+(\mathcal{A})=P_i^+$ and $P_i^-(\mathcal{A})= P_i^-.$
		\item[(iii)] The $i$-th component of $\mathcal{A}$ is said to satisfy the $*$-Property if there exist $\beta_1, \beta_2, \ldots, \beta_m \in \mathbb{R}$ such that the following holds true:
		\[\bigg|\sum_{k=1}^{m} \beta_k a_i^k\bigg| > \max \bigg\{ \bigg|\sum_{k=1}^{m} \beta_k a_j^k\bigg|: j \in \{1,2, \ldots, n\} \setminus P_i^+ \cup P_i^-\bigg\}.\] 
		
	\end{itemize}
\end{definition}
We next recall the characterization of smooth points of the unit sphere of $\ell_{\infty}^n$ and the extreme points of the unit ball of $(\ell_{\infty}^n)^*.$ 

\begin{prop}\label{description}
	$\widetilde{x}=(x_1, x_2, \ldots, x_n) \in S_{\ell_{\infty}^n}$ is  a smooth point if and only if there exists $i_0 \in \{1, 2, \ldots, n\}$ such that $|x_{i_0}|=1$ and $|x_j|< 1,$ for each $j \in \{ 1,2, \ldots, n\} \setminus \{i_0\}.$ Moreover, $f \in ( \ell_{\infty}^n)^*$ is an extreme point of $B_{(\ell_{\infty}^n)^*}$ if and only if there exists $i_0 \in \{1, 2, \ldots, n\}$ such that  $f(x_1, x_2, \ldots, x_n)= x_{i_0}, $ for any $(x_1, x_2, \ldots, x_n) \in \ell_{\infty}^n.$
\end{prop}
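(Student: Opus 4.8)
The plan is to work through the standard isometric identification $(\ell_\infty^n)^* \cong \ell_1^n$, under which every $f \in (\ell_\infty^n)^*$ has the form $f(x_1,\dots,x_n) = \sum_{i=1}^n a_i x_i$ for a unique coefficient vector $(a_1,\dots,a_n)$, with $\|f\| = \sum_{i=1}^n \abs{a_i}$. Both halves of the proposition then reduce to elementary computations in these coordinates, so I would set up (or simply invoke) this identification first and phrase everything in terms of the $a_i$.

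For the smooth points, first I would compute $J(\widetilde{x})$ explicitly for $\widetilde{x}=(x_1,\dots,x_n)\in S_{\ell_\infty^n}$. If $f(x)=\sum a_i x_i$ lies in $J(\widetilde{x})$, then the chain $1 = f(\widetilde{x}) = \sum a_i x_i \le \sum \abs{a_i}\,\abs{x_i} \le \sum \abs{a_i} = 1$ must be a string of equalities. Reading off the two equality conditions shows that $a_i = 0$ whenever $\abs{x_i} < 1$ and that $\mathrm{sign}(a_i) = \mathrm{sign}(x_i)$ on the remaining indices. Hence $J(\widetilde{x})$ is exactly the set of convex combinations of the functionals $\mathrm{sign}(x_i)\,e_i^*$ over the index set $I = \{\, i : \abs{x_i}=1 \,\}$, which is nonempty because $\|\widetilde{x}\|_\infty = 1$. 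This convex set is a singleton precisely when $\abs{I}=1$, i.e.\ exactly when the maximum modulus is attained at a unique coordinate, which is the asserted condition.

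For the extreme points of $B_{(\ell_\infty^n)^*}=B_{\ell_1^n}$, I would show that they are precisely the signed coordinate projections $x \mapsto \pm x_{i_0}$, that is, the vectors $\pm e_{i_0}$ in $\ell_1^n$. For the forward implication, I would suppose a norm-one coefficient vector $a$ has two nonzero entries $a_j,a_k$ and perturb it by moving a small mass $\delta$ between these coordinates along their signs, setting $u = a + \delta\bigl(\mathrm{sign}(a_j)e_j - \mathrm{sign}(a_k)e_k\bigr)$ and $v = a - \delta\bigl(\mathrm{sign}(a_j)e_j - \mathrm{sign}(a_k)e_k\bigr)$; for $\delta$ small enough that no sign flips occur one has $\|u\|_1 = \|v\|_1 = 1$ while $a = \tfrac12(u+v)$ with $u\neq v$, contradicting extremality. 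Thus an extreme point is supported on a single coordinate and must have modulus one there. For the converse, I would verify directly that each $\pm e_{i_0}$ is extreme: if $\pm e_{i_0} = \tfrac12(u+v)$ with $u,v\in B_{\ell_1^n}$, comparing the $i_0$-th coordinate forces $u_{i_0}=v_{i_0}=\pm 1$, which exhausts the $\ell_1$-budget and annihilates all other coordinates, so $u=v=\pm e_{i_0}$.

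I do not anticipate a serious obstacle here, as both statements are classical facts about $\ell_\infty^n$ and its dual $\ell_1^n$. The only care needed is the bookkeeping in the equality analysis for the smooth case (tracking the support condition $a_i=0$ on $\{\abs{x_i}<1\}$ and the sign condition simultaneously), and, in the extreme-point argument, verifying that the perturbation genuinely stays on $S_{\ell_1^n}$, which holds as soon as $\delta$ is small enough to avoid sign changes in the $j$-th and $k$-th coordinates.
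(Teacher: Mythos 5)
Your argument is correct and complete. The paper itself offers no proof of this proposition --- it is recalled as a well-known fact about $\ell_\infty^n$ and its dual --- so there is nothing to compare against; your route (the identification $(\ell_\infty^n)^*\cong\ell_1^n$, the equality analysis in $1=\sum a_i x_i\le\sum|a_i||x_i|\le\sum|a_i|=1$ giving $J(\widetilde{x})$ as the convex hull of $\{\mathrm{sign}(x_i)e_i^*: |x_i|=1\}$, and the mass-shifting perturbation for extreme points of $B_{\ell_1^n}$) is the standard one and fills the gap correctly. One small remark: the proposition as literally stated lists only $f(x)=x_{i_0}$ among the extreme points, whereas the full set is $\{\pm e_{i_0}^*\}$; your proof correctly produces both signs, which is consistent with how the paper actually uses the result later (e.g.\ $Ext(B_{(\ell_\infty^n)^*})=\{\pm f_1,\ldots,\pm f_n\}$ in the proof of Lemma \ref{existence}).
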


The following lemma is essential to characterize the anti-coproximinal and the strongly anti-coproximinal subspaces in $\ell_\infty^n.$

\begin{lemma}\label{existence}
	Let $ \mathcal{A} = \{ \widetilde{a}_1 , \widetilde{a}_2 ,\ldots, \widetilde{a}_m\} $ be a set of linearly independent elements in $\ell_\infty^n$, where $  1 < m < n  $ and $\widetilde{a}_k=(a_1^k,  a_2^k, \ldots, a_n^k ), $ for each  $ 1 \leq k \leq m.$ Let $\mathbb{Y} = span~\mathcal{A}.$ If every component of $\mathcal{A}$ satisfies the $*$-Property and $ | P_i^+ \cup P_i^-| =1,$ for all $ 1 \leq i \leq n$ then for every $f \in Ext(B_{({\ell_\infty^n})^*}),$ there exists a $\widetilde{y} \in Sm(\mathbb{X}) \cap S_\mathbb{Y} $ such that $ J(\widetilde{y}) = \{f\}.$	
\end{lemma}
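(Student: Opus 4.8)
The plan is to reduce the statement to a concrete coordinate condition and then read that condition off directly from the $*$-Property. Throughout, write $\mathbb{X} = \ell_\infty^n$. First I would recall from Proposition \ref{description} that the extreme points of $B_{(\ell_\infty^n)^*}$ are precisely the coordinate evaluations $f_{i_0} \colon (x_1,\dots,x_n) \mapsto x_{i_0}$ together with their negatives. Since $J(-\widetilde{y}) = -J(\widetilde{y})$ for every nonzero $\widetilde{y}$, it suffices to treat $f = f_{i_0}$: once a suitable $\widetilde{y} \in \mathbb{Y}$ is produced for $f_{i_0}$, the vector $-\widetilde{y} \in \mathbb{Y}$ handles $-f_{i_0}$. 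Thus I fix an index $i_0 \in \{1,\dots,n\}$ and aim to construct $\widetilde{y} \in Sm(\mathbb{X}) \cap S_{\mathbb{Y}}$ with $J(\widetilde{y}) = \{f_{i_0}\}$.

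The second step is to translate the desired conclusion into coordinates. Using Proposition \ref{description} again, a unit vector $\widetilde{y} = (y_1,\dots,y_n) \in S_{\ell_\infty^n}$ is smooth with $J(\widetilde{y}) = \{f_{i_0}\}$ if and only if $y_{i_0} = 1$ and $|y_j| < 1$ for every $j \neq i_0$; in that case $\|\widetilde{y}\|_\infty = 1$ holds automatically, so membership in $S_{\mathbb{Y}}$ is free once we know $\widetilde{y} \in \mathbb{Y}$. Hence the whole task collapses to producing a vector in $\mathbb{Y}$ whose $i_0$-th coordinate equals $1$ and strictly dominates, in absolute value, each of its remaining coordinates.

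Now I bring in the two hypotheses. Since the $i_0$-th component always belongs to its own positively associated set, $i_0 \in P_{i_0}^+ \subseteq P_{i_0}^+ \cup P_{i_0}^-$, and the assumption $|P_{i_0}^+ \cup P_{i_0}^-| = 1$ forces $P_{i_0}^+ \cup P_{i_0}^- = \{i_0\}$. Consequently the $*$-Property of the $i_0$-th component, read with this index set, supplies scalars $\beta_1,\dots,\beta_m \in \mathbb{R}$ for which $\widetilde{z} := \sum_{k=1}^m \beta_k \widetilde{a}_k \in \mathbb{Y}$ satisfies $|z_{i_0}| > |z_j|$ for every $j \in \{1,\dots,n\}\setminus\{i_0\}$, where $z_j = \sum_{k=1}^m \beta_k a_j^k$. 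In particular $z_{i_0} \neq 0$, so setting $\widetilde{y} := z_{i_0}^{-1}\widetilde{z} \in \mathbb{Y}$ yields $y_{i_0} = 1$ and $|y_j| = |z_j|/|z_{i_0}| < 1$ for each $j \neq i_0$, which is exactly the coordinate condition isolated in the previous step. This $\widetilde{y}$ is the required smooth point, finishing the case $f_{i_0}$ and, after the sign flip, the case $-f_{i_0}$.

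I do not expect a genuine obstacle; the argument is essentially a dictionary between the $*$-Property and Proposition \ref{description}. The single point deserving care is the cardinality hypothesis, which is precisely what guarantees that the maximum in the $*$-Property ranges over all indices $j \neq i_0$ rather than over a proper subset. This is what upgrades the $*$-Property inequality to strict domination of the $i_0$-th coordinate over \emph{every} other coordinate, and hence what secures smoothness of the normalized vector $\widetilde{y}$.
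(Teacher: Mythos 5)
Your proposal is correct and follows essentially the same route as the paper: use $|P_{i_0}^+\cup P_{i_0}^-|=1$ to make the $*$-Property inequality range over all $j\neq i_0$, obtain a combination $\sum_k\beta_k\widetilde{a}_k$ whose $i_0$-th coordinate strictly dominates the rest, normalize, and invoke Proposition \ref{description}. Your only (harmless) deviation is normalizing by $z_{i_0}^{-1}$ to force $y_{i_0}=1$ directly, where the paper divides by the norm and then absorbs the sign into the choice of $\pm\widetilde{y}$.
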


\begin{proof}
	Let $f_i\in (\ell_\infty^n)^* $ be such that $f_i (x_1, x_2, \ldots, x_n)= x_i.$ Clearly,  $Ext(B_{({\ell_\infty^n})^*})=\{ \pm f_1 , \pm f_2, \ldots, \pm f_n\}. $ 
	Since the $i$-th component satisfies the $*$-Property and $| P_i^+ \cup P_i^-| =1$ then there exist $\beta_1, \beta_2, \ldots, \beta_m \in \mathbb{R}$ such that 
	\[\big|\sum_{k=1}^{m} \beta_k a_i^k \big| > \max \bigg\{ \big|\sum_{k=1}^{m} \beta_k a_j^k\big|: j \in \{1,2, \ldots, n\} \setminus \{i\}\bigg\}.\]
	Clearly, $\|\sum_{k=1}^{m} \beta_k \widetilde{a_k}\|= |\sum_{k=1}^{m} \beta_k a_i^k|.$ Take $\widetilde{y}= \frac{\sum_{k=1}^{m} \beta_k \widetilde{a_k}}{\|\sum_{k=1}^{m} \beta_k \widetilde{a_k}\|}.$ Clearly, $|f_i(\widetilde{y})|=1.$ It is easy to observe from Proposition \ref{description} that $\widetilde{y}\in Sm(\mathbb{X}) \cap S_\mathbb{Y}.$  Thus  either $J(\widetilde{y}) = \{f_i\}$ or $J(-\widetilde{y}) = \{f_i\}.$ 
\end{proof}

We are now ready to present the desired characterization.
\begin{theorem}\label{no}
	Let $ \mathcal{A} = \{ \widetilde{a}_1 , \widetilde{a}_2 ,\ldots, \widetilde{a}_m\} $ be a set of linearly independent elements in $\ell_\infty^n$, where $  1 < m < n  $ and $\widetilde{a}_k=(a_{1}^k, a_2^k, \ldots, a_n^k ), $ for each  $ 1 \leq k \leq m. $ Let $ \mathbb{Y}= span~\mathcal{A}.$ Then the following statements are equivalent: 
	\begin{itemize}
		\item[(i)] $\mathbb{Y}$ is a strongly anti-coproximinal subspace of $\ell_{\infty}^n.$
		\item[(ii)] $\mathbb{Y}$ is an anti-coproximinal subspace of $\ell_{\infty}^n.$
		\item[(iii)] Every component of $\mathcal{A}$ satisfies the $*$-Property and $ | P_i^+ \cup P_i^-| =1,$ for all $ 1 \leq i \leq n.$ 
	\end{itemize}
	
\end{theorem}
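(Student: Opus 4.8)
The plan is to establish the cycle of implications (iii) $\Rightarrow$ (i) $\Rightarrow$ (ii) $\Rightarrow$ (iii). The implication (i) $\Rightarrow$ (ii) requires no work, since a strongly anti-coproximinal subspace is always anti-coproximinal. So the two substantive tasks are (iii) $\Rightarrow$ (i) and the reverse implication (ii) $\Rightarrow$ (iii), the latter being the main obstacle.

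For (iii) $\Rightarrow$ (i), I would feed the hypothesis straight into Lemma \ref{existence}. Writing $f_i$ for the coordinate functional $f_i(x_1,\dots,x_n)=x_i$, recall that $Ext(B_{(\ell_\infty^n)^*})=\{\pm f_1,\dots,\pm f_n\}$ by Proposition \ref{description}. Lemma \ref{existence} produces, for every extreme functional $f$, a smooth point $\widetilde{y}\in Sm(\mathbb{X})\cap S_\mathbb{Y}$ with $J(\widetilde{y})=\{f\}$; hence $f\in\mathcal{J}_\mathbb{Y}$. This gives $Ext(B_{(\ell_\infty^n)^*})\subseteq\mathcal{J}_\mathbb{Y}$, and since the reverse inclusion always holds, $\mathcal{J}_\mathbb{Y}=Ext(B_{(\ell_\infty^n)^*})$. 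The equivalence (iii) $\Leftrightarrow$ (i) of Theorem \ref{extreme} then yields that $\mathbb{Y}$ is strongly anti-coproximinal.

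The heart of the matter is (ii) $\Rightarrow$ (iii), which I would prove by contraposition: assuming (iii) fails, I will produce $x\in\ell_\infty^n\setminus\mathbb{Y}$ admitting a best coapproximation, so that $\mathbb{Y}$ is not anti-coproximinal. The negation of (iii) splits into two failure modes, which I translate into coordinatewise statements about the elements $y=\sum_k\beta_k\widetilde{a}_k$ of $\mathbb{Y}$. First, if $|P_i^+\cup P_i^-|>1$ for some $i$, then there exist $j\neq i$ and a sign $\epsilon\in\{+1,-1\}$ with $a_i^k=\epsilon a_j^k$ for all $k$, i.e. $y_i=\epsilon y_j$ for every $y\in\mathbb{Y}$. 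Second, if every $|P_i^+\cup P_i^-|=1$ but the $i_0$-th component fails the $*$-Property, then (since $P_{i_0}^+\cup P_{i_0}^-=\{i_0\}$, so the competing index set is $\{1,\dots,n\}\setminus\{i_0\}$) one has $|y_{i_0}|\le\max_{j\neq i_0}|y_j|$ for every $y\in\mathbb{Y}$; in particular $i_0$ is never the strict maximal coordinate.

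In each case I would exhibit an explicit $x$ and verify $\mathbb{Y}\perp_B x$, so that $0$ is a best coapproximation by Theorem \ref{PS}. In the first case take $x=e_i-\epsilon e_j$; since $x_i=\epsilon x_j$ fails, $x\notin\mathbb{Y}$. In the second take $x=e_{i_0}$, which lies outside $\mathbb{Y}$ because $y=e_{i_0}$ would violate $|y_{i_0}|\le\max_{j\neq i_0}|y_j|$. The verification rests on the description of $J(y)$ in $\ell_\infty^n$: for $y\in\mathbb{Y}$ with maximal-coordinate set $M=\{l:|y_l|=\|y\|\}$, the functionals in $J(y)$ are exactly the convex combinations of $\{\mathrm{sgn}(y_l)f_l:l\in M\}$, so it suffices to find $f_y\in J(y)$ with $f_y(x)=0$. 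If the indices supporting $x$ lie outside $M$ this is automatic, as $x$ vanishes on $M$; if they lie in $M$, the linkage $y_i=\epsilon y_j$ (first case) makes the signed values cancel in the average $\tfrac12\mathrm{sgn}(y_i)f_i+\tfrac12\mathrm{sgn}(y_j)f_j$, while the inequality $|y_{i_0}|\le\max_{j\neq i_0}|y_j|$ (second case) forces some $j\neq i_0$ into $M$, on which $e_{i_0}$ vanishes, so that functional annihilates $x$. This sign-and-cancellation bookkeeping across the cases is the main obstacle, and it is precisely where the two quantitative hypotheses of (iii) get used.
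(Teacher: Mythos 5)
Your proposal is correct and follows essentially the same route as the paper: the trivial implication (i) $\Rightarrow$ (ii), Lemma \ref{existence} for (iii) $\Rightarrow$ (i), and for (ii) $\Rightarrow$ (iii) the construction of an explicit element outside $\mathbb{Y}$ admitting $0$ as a best coapproximation via Theorem \ref{PS} and the structure of $J(y)$ in $\ell_\infty^n$. The only (harmless) variations are that you route (iii) $\Rightarrow$ (i) through Theorem \ref{extreme} rather than contradicting $\|\widetilde{b}-y_0\|=\max_{f\in Ext(B_{(\ell_\infty^n)^*})}|f(\widetilde{b}-y_0)|$ directly, and you use the witness $e_i-\epsilon e_j$ where the paper uses the single coordinate vector $e_l$.
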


\begin{proof}
	
We begin the proof by noting that (i) $\implies$ (ii)  follows trivially.\\
	Next we prove that (ii) $\implies$ (iii).
	Let $Ext(B_{({\ell_\infty^n})^*}) = \{\pm f_1, \pm f_2, \ldots, \pm f_n\}, $ where $f_i (x_1, x_2, \ldots, x_n)= x_i,$ for any $(x_1, x_2, \ldots, x_n)\in \ell_\infty^n.$
		Suppose on the contrary that for some $j \in \{1, 2, \ldots, n\},$ the $ j$-th component does not satisfy the $*$-Property. It is easy to observe that there does not exist any $y\in \mathbb{Y}$ such that $J(y)=\{f_j\}.$ Otherwise, considering $y= \sum_{k=1}^{m} \beta_k \widetilde{a}_k, $ for some $\beta_1, \beta_2, \ldots, \beta_m \in \mathbb{R},$ we obtain that $|\sum_{k=1}^{m} \beta_k a_j^k|=|f_j(y)|> |f_t(y)|= |\sum_{k=1}^{m} \beta_k a_t^k|, $ for any $t \in \{1, 2, \ldots, n\} \setminus \{j\},$ which contradicts that the $j$-th component does not satisfy the $*$-Property. Let us consider $\widetilde{b}=(b_1, b_2, \ldots, b_n)\in \ell_\infty^n,$ where $b_j=1$ and $b_k=0,$ for all $k \neq j.$ Clearly, $\widetilde{b} \notin \mathbb{Y}.$ Since $J(y)$ is a face of $B_{(\ell_{\infty}^n)^*}$ and  there does not exist any $y \in \mathbb{Y}$ such that $J(y)=\{f_j\},$ we conclude that for any $y \in \mathbb{Y},$ there exists an $f \in Ext(B_{({\ell_{\infty}^n})^*}) \setminus \{ \pm f_j\}$ such that $f \in J(y).$ As $f(\widetilde{b})=0,$ for any $f \in Ext(B_{{\ell_{\infty}^n}^*}) \setminus \{ \pm f_j\},$ by using Theorem \ref{PS} we obtain that $0$ is a best coapproximation to $\widetilde{b}$ out of $\mathbb{Y}.$ This contradicts that $\mathbb{Y}$ is an anti-coproximinal subspace of $\ell_{\infty}^n$ . 
		
 To obtain (iii), we now need to show that  $ |P_i^+ \cup P_i^-|=1,$ for all $1 \leq i \leq n.$ Clearly, $i \in P_i^+ \cup P_i^-.$ 
	Suppose on the contrary that  $l \in P_{i}^+ \cup P_{i}^-,$ for some $l \in \{1, 2, \ldots, n\} \setminus \{i\}.$ Consider the element  $\widetilde{b}= (b_1, b_2, \ldots, b_n)  \in \ell_\infty^n$, where $ b_l = 1$ and $ b_k = 0,$ for all $ k \neq l.$ Clearly, $\widetilde{b} \notin \mathbb{Y}$ and $f_k(\widetilde{b})=0,$ for any $k \in \{1, 2, \ldots, n\} \setminus \{l\}.$ As, $l \in P_{i}^+ \cup P_{i}^-,$ for any $y \in \mathbb{Y},$ $|f_l(y)|= |f_i(y)|.$ Therefore, if $f_l \in J(y),$ for some $y \in \mathbb{Y},$ then either $f_i \in J(y)$ or $-f_i \in J(y).$
	We conclude that for any $y \in \mathbb{Y},$ there exists $f \in Ext(B_{{\ell_\infty^n}^*}) \setminus \{\pm f_l\}$ such that $f \in J(y).$ As, $f(\widetilde{b})=0,$ for any $f \in Ext(B_{{\ell_\infty^n}^*}) \setminus \{\pm f_l\},$ by using Theorem \ref{PS} we obtain that $0$ is a best coapproximation to $\widetilde{b}$ out of $\mathbb{Y}.$ This again contradicts our hypothesis that $\mathbb{Y}$ is an anti-coproximinal subspace of $\ell_{\infty}^n$. This completes the proof.

			Let us now prove (iii) $\implies$ (i). Suppose on the contrary that $y_0$ is an $ \epsilon$-best coapproximation to $\widetilde{b} \in \ell_\infty^n \setminus \mathbb{{Y}}$ out of $ \mathbb{{Y}}.$ Following Proposition \ref{approximate}(iii), we note that for each $y \in \mathbb{Y}$ there exists an $f \in J(y)$ such that $|f(\widetilde{b} - y_0)| \leq \epsilon \|\widetilde{b} - y_0\|,$ for some $\epsilon \in [0, 1).$ 
	Since each component of $\mathcal{A}$ satisfying the $*$-Property and $|P_i^+ \cup P_i^-| =1,$ it follows from Lemma \ref{existence} that for every $f \in Ext(B_{({\ell_\infty^n})^*})$ there exists a $\widetilde{y} \in Sm(\mathbb{X}) \cap  S_\mathbb{Y} $ such that $ J(\widetilde{y}) = \{f\}.$  
	Therefore, $|f(\widetilde{b} -y_0)| \leq \epsilon \|\widetilde{b} - y_0\|,$ for any $f \in Ext(B_{({\ell_\infty^n})^*}).$ This implies that $|f(\widetilde{b} -y_0)| <  \|\widetilde{b} - y_0\|,$ for any $f \in Ext(B_{({\ell_\infty^n})^*}).$ Therefore, it is easy to observe that
	\[\|\widetilde{b} - y_0\| = \sup_{\|f\| \leq 1}\big\{|f(\widetilde{b} - y_0)|\big\} = \max_{ f \in Ext(B_{({\ell_\infty^n})^*})} \big\{|f(\widetilde{b} - y_0)|\big\}  <   \|\widetilde{b} - y_0\|,\] a contradiction. Hence (iii) $\implies$ (i).

\end{proof}

The following example illustrates the computational effectiveness of Theorem \ref{no} in verifying the strong anti-coproximinality of a given subspace of $ \ell_\infty^n. $

\begin{example}\label{example3}
	Let $ \widetilde{a}_1= (-4, 2, 3, 1, 3), \widetilde{a}_2= (1, -5, 4, 2, -3), \widetilde{a}_3=( 1, 3, -7, 4, 6) \in \ell_\infty^5$ and let $ \mathbb{{Y}}= span\{ \widetilde{a}_1, \widetilde{a}_2, \widetilde{a}_3\}.$ The $1$st, $2$nd, $3$rd, $4$th and the $ 5$th components are $ (-4, 1, 1), (2, -5, 3), (3, 4, -7), (1, 2, 4)$ and $(3, -3, 6),$ respectively. It is immediate that $ |P_i^+ \cup P_i^-|= 1,$ for all $ 1 \leq i \leq 5.$ It is straightforward to verify that every component satisfies the $*$-Property. Therefore, following Theorem  \ref{no}, $\mathbb{{Y}}$ is a strongly  anti-coproximinal subspace of $ \ell_\infty^5.$ 
\end{example}

In light of the above example, we make the following remark that emphasizes the importance of the mother space in deciding whether a given subspace is strongly anti-coproximinal or not.

\begin{remark}\label{rem}
	Let $\mathbb{X} = \ell_\infty^6$ and let $\{e_1, e_2, \ldots, e_6\}$ be the standard ordered basis of $\mathbb{X}.$ Suppose that $\mathbb{Z}= span \{e_1, e_2, \ldots, e_5\}$ is a subspace of $\mathbb{X}.$ Let us now define $\psi: \ell_{\infty}^5 \to \mathbb{Z},$ as $\psi(x_1, x_2, \ldots, x_5)= (x_1, x_2, \ldots, x_5, 0),$ for any $(x_1, x_2, \ldots, x_5) \in \ell_{\infty}^5.$ Clearly, $\psi$ is an isometric isomorphism.
		Let $\mathbb{Y}$ be the same subspace  given in Example \ref{example3}. As $\mathbb{Y}$ is strongly anti-coproximinal in $\ell_{\infty}^5,$  clearly we observe that $\psi(\mathbb{Y})$ is also strongly anti-coproximinal  in $\mathbb{Z}.$ However, $\psi(\mathbb{Y})$ is not an anti-coproximinal subspace  of $\mathbb{X},$ as $\psi(\widetilde{a}_1)= (-4, 2, 3, 1, 3, 0), \psi(\widetilde{a}_2)= (1, -5, 4, 2, -3, 0), \psi(\widetilde{a}_3)= (1, 3, -7, 4, 6, 0)$ and the $6$th component does not satisfy the $*$-Property (see Theorem \ref{no}).  
\end{remark}

As promised earlier, here we present an example of a subspace which is not strongly anti-coproximinal but satisfies the necessary condition of Theorem \ref{necessary; strong}.

\begin{example}\label{not strong}
	Let $\widetilde{a}_1= (1, 1, 2), \widetilde{a}_2=(2, 2, 1) \in \ell_\infty^3$ and let $\mathbb{Y}= span \{\widetilde{a}_1, \widetilde{a}_2\}.$ The $1$st, $2$nd, $3$rd components are $ (1, 2), (1, 2), (2, 1),$ respectively. It is immediate that $ |P_1^+ \cup P_1^-|= 2.$ Therefore, following Theorem  \ref{no}, $\mathbb{{Y}}$ is not a strongly  anti-coproximinal subspace of $ \ell_\infty^3.$ However, by a straightforward observation we can verify that for any $x \in \ell_\infty^3,$ there exists a $y \in \mathbb{Y}$ such that $J(y) \cap J(x) \neq \emptyset.$
\end{example}

  In \cite{SSGP2}, the authors introduced the notions `the zero set($\mathcal{Z}_{\mathcal{A}}$)'    and `the minimal norming set($\mathcal{N}$)' to study the best coapproximation problem in $\ell_1^n.$ Using these notions, we  study the anti-coproximinal subspaces and the strongly anti-coproximinal subspaces in $\ell_1^n.$ Let us now recall the definitions of the zero set and the minimal norming set.

\begin{definition}\cite{SSGP2}
	Let $\mathcal{A}=\{\widetilde{a}_1, \widetilde{a}_2, \ldots, \widetilde{a}_m\}$ be a set of linearly independent elements in $\ell_1^n,$ where $1 \leq m \leq n$ and $\widetilde{a}_k = (a_1^k, a_2^k, \ldots, a_n^k),$ for each $1\leq k \leq m.$ The \emph{zero set} $ \mathcal{Z}_{\mathcal{A}}$ of $\mathcal{A}$ is defined as 
	\[
	\mathcal{Z}_{\mathcal{A}} = \Big\{  i \in \{1, 2, \ldots, n\} : \left( a_{i} ^1 ,a_{i}^2 ,\ldots,a_{i} ^m \right) =  (0, 0, \ldots, 0) \Big\}.
	\]
\end{definition}

\begin{definition}\cite{SSGP2}\label{minimal}
	A set $S$ in a Banach space $\mathbb{X}$ is said to be   symmetric  if $x \in S$ implies  $-x \in S.$
	Let $\mathbb{{Y}}  $ be a subspace of $ \ell_1^n.$  A symmetric set $\mathcal{N}$ is said to be 
	a norming set  of $ \mathbb{{Y}}$ if  $\left (M_g \cap Ext(B_{\ell_\infty^n}) \right)  \cap  \mathcal{N} \neq \emptyset,$ for each  $g\in \psi(\mathbb{{Y}}),$ where $\psi$ is the canonical isometric isomorphism between $\ell_1^n$ and $(\ell_\infty^n)^{*}.$ A norming set  $\mathcal{N}$  is said to be  a \textit{minimal norming set} of  $ \mathbb{{Y}}$ if for any norming set $\mathcal{M}$ of $ \mathbb{{Y}},$ $\mathcal{M} \subset \mathcal{N}$ implies that  $\mathcal{M}= \mathcal{N}.$
\end{definition}

In the following theorem we completely characterize the anti-coproximinal subspaces in $\ell_1^n$ in terms of the minimal norming set. As we will observe, this characterization turns out to be particularly helpful for explicitly describing the anti-coproximinal subspaces in $\ell_{1}^n.$

\begin{theorem}\label{anti}
	Let $\mathbb{Y}$ be a subspace of $\ell_1^n$ and let $\mathcal{A}=\{\widetilde{a}_1, \widetilde{a}_2, \ldots, \widetilde{a}_m\} $ be a basis of $\mathbb{Y},$ where $1< m <n$ and $\widetilde{a}_k =(a_1^k, a_2^k, \ldots, a_n^k),$ for each $1 \leq k \leq m.$ Then the following statements hold true:
	\begin{itemize}
		\item[(i)]  If $\mathcal{Z}_\mathcal{A}\neq \emptyset, $ then $\mathbb{Y}$ is not an anti-coproximinal subspace of $\ell_{1}^n.$
		\item[(ii)] If $\mathcal{Z}_\mathcal{A} = \emptyset,$ then $\mathbb{Y}$ is an anti-coproximinal subspace of $\ell_{1}^n$ if and only if $dim(span~\mathcal{N})=n,$ where $\mathcal{N}$ is the minimal norming set.
	\end{itemize}
\end{theorem}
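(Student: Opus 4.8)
The plan is to reduce both parts to the characterization already proved in Theorem~\ref{theorem}, after translating the combinatorial data $\mathcal{Z}_\mathcal{A}$ and $\mathcal{N}$ into the language of support functionals. I will freely use the structure of $\ell_1^n$: the support functionals of a nonzero $\widetilde{x}=(x_1,\ldots,x_n)\in \ell_1^n$ are exactly the vectors $f=(f_1,\ldots,f_n)\in B_{\ell_\infty^n}$ with $f_i=\operatorname{sgn}(x_i)$ on the support of $\widetilde{x}$ and $f_i\in[-1,1]$ elsewhere; in particular $\widetilde{x}$ is smooth if and only if all its coordinates are nonzero, and then $J(\widetilde{x})=\{\operatorname{sgn}(\widetilde{x})\}$ is an extreme point of $B_{\ell_\infty^n}$.

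For (i), suppose $i_0\in \mathcal{Z}_\mathcal{A}$. Then every $y\in \mathbb{Y}$ satisfies $y_{i_0}=0$, so $e_{i_0}\notin \mathbb{Y}$. I would verify via Theorem~\ref{PS} that $0$ is a best coapproximation to $e_{i_0}$ out of $\mathbb{Y}$: for each nonzero $y\in \mathbb{Y}$, since $y_{i_0}=0$ I may choose $f_y\in J(y)$ whose $i_0$-th coordinate equals $0$, whence $f_y(e_{i_0})=0$; the case $y=0$ is trivial because $0\perp_B e_{i_0}$ always. This exhibits a point of $\mathbb{X}\setminus\mathbb{Y}$ possessing a best coapproximation, so $\mathbb{Y}$ is not anti-coproximinal.

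For (ii), assume $\mathcal{Z}_\mathcal{A}=\emptyset$. First I observe that $Sm(\ell_1^n)\cap \mathbb{Y}$ is dense in $\mathbb{Y}$, being the complement in $\mathbb{Y}$ of the finitely many proper subspaces $\{y\in \mathbb{Y}: y_i=0\}$ (each proper precisely because $\mathcal{Z}_\mathcal{A}=\emptyset$ forces every coordinate functional to be nonzero on $\mathbb{Y}$). Hence the hypothesis of Theorem~\ref{theorem} holds, and $\mathbb{Y}$ is anti-coproximinal if and only if $\dim(span~\mathcal{J}_\mathbb{Y})=n$. It then remains to show $\dim(span~\mathcal{N})=\dim(span~\mathcal{J}_\mathbb{Y})$, which I will obtain by proving the stronger identity $\mathcal{N}=\mathcal{J}_\mathbb{Y}$. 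The inclusion $\mathcal{J}_\mathbb{Y}\subseteq \mathcal{N}$ holds for \emph{every} norming set: a smooth unit vector $y\in \mathbb{Y}$ has the single norming extreme functional $\operatorname{sgn}(y)$, so the norming condition forces $\operatorname{sgn}(y)\in \mathcal{N}$. Conversely $\mathcal{J}_\mathbb{Y}$ is itself a norming set: given nonzero $y\in \mathbb{Y}$, density lets me pick a smooth $y'\in \mathbb{Y}$ so close to $y$ that $\operatorname{sgn}(y')$ agrees with $\operatorname{sgn}(y)$ on $\operatorname{supp}(y)$, making $\operatorname{sgn}(y')\in \mathcal{J}_\mathbb{Y}$ an extreme norming functional of $y$. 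Since $\mathcal{J}_\mathbb{Y}$ is symmetric, lies in every norming set, and is a norming set, it is the unique minimal norming set, so $\mathcal{N}=\mathcal{J}_\mathbb{Y}$ and the equivalence follows.

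The main obstacle is the identification $\mathcal{N}=\mathcal{J}_\mathbb{Y}$. Its forcing direction rests on smooth points of $\ell_1^n$ having a \emph{unique} extreme support functional, while the norming direction relies on the density of smooth points together with the stability of the sign pattern on the support — exactly the place where $\mathcal{Z}_\mathcal{A}=\emptyset$ is essential. Once this identification is in hand, part (ii) is an immediate consequence of Theorem~\ref{theorem}.
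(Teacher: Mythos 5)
Your proposal is correct, and for part (ii) it takes a genuinely different route from the paper. For (i) the two arguments are essentially equivalent: the paper computes directly that $\|\widetilde{b}-y\|=\|y\|+1>\|y\|$ for $\widetilde{b}=e_{i_0}$, while you reach the same conclusion through Theorem \ref{PS} by choosing support functionals of $y$ that vanish in the $i_0$-th coordinate; both are fine. For (ii) the paper does not go through Theorem \ref{theorem} at all: it invokes the external characterization of best coapproximations in $\ell_1^n$ from \cite[Th.~2.4]{SSGP2} (a linear system indexed by the minimal norming set) and argues both directions by producing, or ruling out, a nonzero $\widetilde{d}$ annihilated by every element of $\mathcal{N}$, using \cite[Th.~2.2]{SSGP2} for the uniqueness of $\mathcal{N}$. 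You instead verify that $\mathcal{Z}_{\mathcal{A}}=\emptyset$ forces $Sm(\ell_1^n)\cap\mathbb{Y}$ to be dense in $\mathbb{Y}$, apply the paper's own Theorem \ref{theorem}, and then prove the identification $\mathcal{N}=\phi(\mathcal{J}_{\mathbb{Y}})$ (you should write the canonical isomorphism $\phi$ explicitly, but this is cosmetic). Your route is more self-contained relative to this paper, re-derives the uniqueness of the minimal norming set rather than citing it, and makes transparent why $\dim(span~\mathcal{N})=n$ is the right condition — it is $\dim(span~\mathcal{J}_{\mathbb{Y}})=n$ in disguise. Notably, your identity $\mathcal{N}=\phi(\mathcal{J}_{\mathbb{Y}})$ under the sole hypothesis $\mathcal{Z}_{\mathcal{A}}=\emptyset$ strengthens the paper's Lemma \ref{equal}, which establishes it only for strongly anti-coproximinal subspaces; the trade-off is that the paper's argument is shorter once the machinery of \cite{SSGP2} is granted.
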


\begin{proof}
	(i) Suppose that $j \in \mathcal{Z}_\mathcal{A},$ i.e., $a_j^k = 0,$ for all $k \in \{1, 2, \ldots, n\}.$ Let $\widetilde{b}= (b_1, b_2, \ldots, b_n) \in \ell_1^n,$ where $b_j=1$ and $b_i=0,$ for any $i \in \{1, 2, \ldots, n\} \setminus \{j\}.$  Clearly, $\widetilde{b} \notin \mathbb{Y}.$ For any $\beta_1, \beta_2, \ldots, \beta_n \in \mathbb{R},$ it is easy to see that 
	\[\|\widetilde{b} - \sum_{i=1}^m \beta_i \widetilde{a}_i\| = \|\sum_{i=1}^m \beta_i \widetilde{a}_i\| + 1 > \|\sum_{i=1}^m \beta_i \widetilde{a}_i - 0\|.\] Therefore, $0$ is a best coapproximation to $\widetilde{b}$ out of $\mathbb{Y}$ and consequently $\mathbb{Y}$ is not an  anti-coproximinal subspace of $\ell_{1}^n$.\\
	
	(ii) Since $\mathcal{Z}_\mathcal{A} = \emptyset,$ it follows from \cite[Th 2.2]{SSGP2} that the minimal norming set $\mathcal{N}$ is unique. Suppose that $\mathcal{N} = \{ \pm \widetilde{x}_1, \pm \widetilde{x}_2, \ldots, \pm \widetilde{x}_q\},$ where $\widetilde{x}_k=(x_1^k, x_2^k, \ldots, x_n^k).$ Without loss of generality we assume that $ \{ \widetilde{x}_1, \widetilde{x}_2, \ldots, \widetilde{x}_q\}$ is linearly independent.\\
	Let us first prove the necessary part. Suppose on the contrary that $q < n.$ It is straightforward to see that there exists a non zero element $\widetilde{b} = (b_1, b_2, \ldots, b_n) \in \ell_1^n$ such that $\sum_{i=1}^{n} b_i x_i^p=0,$ for any $1 \leq p \leq q.$ Observe that $\widetilde{b} \notin \mathbb{Y}.$ Otherwise,   there exists an $\widetilde{x}_k \in \mathcal{N}$  such that $\widetilde{x}_k \in M_{\psi(\widetilde{b})},$ for some  $1 \leq k \leq q,$ i.e., \[\psi(\widetilde{b})(\widetilde{x}_k) = \sum_{i=1}^{n} b_i x_i^k \neq 0, \] 
	where $\psi: \ell_1^n \rightarrow (\ell_\infty^n)^{*}$ is the canonical isometric isomorphism.
	Therefore, for $\alpha_1= \alpha_2= \ldots= \alpha_m =0,$ the following system of linear equations holds true:
	\begin{eqnarray*}
		\alpha_1\sum_{i=1}^n a_i^1 x_i^p + \alpha_2\sum_{i=1}^n a_i^2 x_i^p + \ldots + \alpha_m\sum_{i=1}^n a_i^m x_i^p = 0= \sum_{i=1}^{n} b_i x_i^p,
	\end{eqnarray*}
	where $p \in \{ 1, 2, \ldots, q\}.$ Now applying Theorem \cite[Th. 2.4]{SSGP2}, it is easy to observe that $0$ is a best coapproximation to $\widetilde{b}$ out of $\mathbb{Y}.$ This contradicts the fact that $\mathbb{Y}$ is an anti-coproximinal subspace of $\ell_{1}^n$. Thus we obtain $q=n.$ This proves the necessary part.\\
	
	We now prove the sufficient part.  Let $\mathcal{N}$ be the minimal norming set and let $|\mathcal{N}|=q.$  Suppose on the contrary that $\mathbb{Y}$ is not an anti-coproximinal subspace of $\ell_{1}^n$. Then there exists $\widetilde{b} = (b_1, b_2, \ldots, b_n) \in \ell_1^n \setminus \mathbb{Y}$ such that $y_0$ is a best coapproximation to $\widetilde{b}$ out of $\mathbb{Y}.$ It follows that $0$ is a best coapproximation to $\widetilde{b} - y_0$ out of $\mathbb{Y}.$ Assume that $\widetilde{b} - y_0 = \widetilde{d} = (d_1, d_2, \ldots, d_n).$ Clearly, $\widetilde{d} \in \ell_1^n \setminus \mathbb{Y}.$ Applying Theorem \cite[Th. 2.4]{SSGP2}, we obtain that $\sum_{i=1}^n d_ix_i^p = 0,$  for each $1 \leq p \leq q.$ 
	Since $dim(span~\mathcal{N})= dim(span \{ (x_1^p, x_2^p, \ldots, x_n^p): 1 \leq p \leq q\})=n,$ it follows that $\widetilde{d}=(d_1, d_2, \ldots, d_n)=0.$ 
	This contradicts that $\widetilde{d} \in \ell_1^n \setminus \mathbb{Y}.$ This completes the proof.

\end{proof}

Next we give an example of an anti-coproximinal subspace of $\ell_1^3$ by applying Theorem \ref{anti}.

\begin{example}
	Let $\mathcal{A} = \{(0, 1, 1), (-1, 0, 1)\} \subset \ell_1^3$ and  let $\mathbb{Y} = span~\mathcal{A}.$ It is easy to observe that $\mathcal{Z}_\mathcal{A} = \emptyset.$ Then following the same technique as given in the proof of \cite[Th. 2.2]{SSGP2}, we obtain that the minimal norming set $\mathcal{N}$ of $\mathbb{Y}$ as $\{\pm (1,1,1), \pm (-1, 1, 1), \pm(-1, -1, 1)\}.$ Thus we get $dim(span~\mathcal{N}) =3.$ Applying Theorem \ref{anti}(ii) we conclude that $\mathbb{Y}$ is an anti-coproximinal subspace of $\ell_{1}^n$.
    
\end{example}

Using Theorem \ref{extreme} we show that there does not exist any strongly anti-coproximinal subspace in $\ell_1^n.$ Before proving the result, we note the following lemma.

\begin{lemma}\label{equal}
	Let $\mathbb{Y}$ be a strongly anti-coproximinal subspace of $\ell_1^n.$ Then $\phi(\mathcal{J}_\mathbb{Y})=\mathcal{N},$ where $\mathcal{N}$ is the minimal norming set and  $\phi$ is the canonical isometric isomorphism between $(\ell_1^n)^*$ and $\ell_\infty^n.$ 
\end{lemma}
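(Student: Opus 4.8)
The plan is to translate both sides of the claimed identity into the language of sign vectors and then match them. Recall that $\phi$ carries $Ext(B_{(\ell_1^n)^*})$ onto $Ext(B_{\ell_\infty^n})=\{-1,1\}^n$, and that a point $\widetilde{y}=(y_1,\dots,y_n)\in S_{\ell_1^n}$ is smooth precisely when every $y_i\neq 0$, in which case $J(\widetilde{y})=\{f\}$ with $\phi(f)=(\operatorname{sgn} y_1,\dots,\operatorname{sgn} y_n)$. Thus $\phi(\mathcal{J}_\mathbb{Y})$ is exactly the set of full sign patterns of those elements of $\mathbb{Y}$ having no vanishing coordinate, and it is symmetric. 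Since $\mathbb{Y}$ is strongly anti-coproximinal, it is anti-coproximinal, so the contrapositive of Theorem \ref{anti}(i) forces $\mathcal{Z}_\mathcal{A}=\emptyset$ for a basis $\mathcal{A}$ of $\mathbb{Y}$; hence by \cite[Th. 2.2]{SSGP2} the minimal norming set $\mathcal{N}$ is well defined (unique), and by its definition it is a symmetric subset of $Ext(B_{\ell_\infty^n})$.

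First I would prove $\phi(\mathcal{J}_\mathbb{Y})\subseteq\mathcal{N}$. Take a smooth $\widetilde{y}\in S_\mathbb{Y}$ and put $g=\psi(\widetilde{y})\in\psi(\mathbb{Y})$. Because every coordinate of $\widetilde{y}$ is nonzero, $g$ attains its norm on $S_{\ell_\infty^n}$ at the single extreme point $(\operatorname{sgn} y_1,\dots,\operatorname{sgn} y_n)=\phi(J(\widetilde{y}))$, so $M_g\cap Ext(B_{\ell_\infty^n})$ is this singleton. The defining property of a norming set then forces $\phi(J(\widetilde{y}))\in\mathcal{M}$ for \emph{every} norming set $\mathcal{M}$, and in particular $\phi(J(\widetilde{y}))\in\mathcal{N}$. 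As $\widetilde{y}$ was an arbitrary smooth point of $S_\mathbb{Y}$, this yields $\phi(\mathcal{J}_\mathbb{Y})\subseteq\mathcal{N}$.

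The reverse inclusion is the heart of the matter, and I would obtain it by showing that $\phi(\mathcal{J}_\mathbb{Y})$ is itself a norming set of $\mathbb{Y}$ and then invoking minimality of $\mathcal{N}$. Fix $\widetilde{y}\in\mathbb{Y}$ and let $Z=\{i:y_i=0\}$; the members of $M_{\psi(\widetilde{y})}\cap Ext(B_{\ell_\infty^n})$ are precisely the sign vectors agreeing with $\operatorname{sgn} y_i$ off $Z$, and I must produce a smooth $\widetilde{y}'\in\mathbb{Y}$ realising one of them, i.e. a perturbation $\widetilde{y}'=\widetilde{y}+\delta w$ with all coordinates nonzero and with unchanged signs on $\{1,\dots,n\}\setminus Z$. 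Since $\mathcal{Z}_\mathcal{A}=\emptyset$, for each $i\in Z$ the set $H_i=\{v\in\mathbb{Y}:v_i=0\}$ is a proper subspace of $\mathbb{Y}$; as a finite union of proper subspaces cannot exhaust a real vector space, there is $w\in\mathbb{Y}\setminus\bigcup_{i\in Z}H_i$, so $w_i\neq 0$ for every $i\in Z$. For sufficiently small $\delta>0$ the vector $\widetilde{y}'=\widetilde{y}+\delta w$ then has all coordinates nonzero and keeps the sign of $y_i$ wherever $y_i\neq 0$; normalising, $\operatorname{sgn}\widetilde{y}'\in\phi(\mathcal{J}_\mathbb{Y})\cap\bigl(M_{\psi(\widetilde{y})}\cap Ext(B_{\ell_\infty^n})\bigr)$. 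Hence $\phi(\mathcal{J}_\mathbb{Y})$ is a symmetric norming set contained in $\mathcal{N}$, and minimality of $\mathcal{N}$ gives $\phi(\mathcal{J}_\mathbb{Y})=\mathcal{N}$.

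The main obstacle is the bookkeeping between the two isometric identifications $\phi:(\ell_1^n)^*\to\ell_\infty^n$ and $\psi:\ell_1^n\to(\ell_\infty^n)^*$: one must verify that the unique extreme point at which $\psi(\widetilde{y})$ norms coincides with $\phi(J(\widetilde{y}))$, so that the ``forced'' members of every norming set are exactly the $\phi$-images of the support functionals of smooth points of $S_\mathbb{Y}$. A secondary subtlety is the simultaneous perturbation, where a single $w\in\mathbb{Y}$ must avoid every coordinate hyperplane indexed by $Z$; this is precisely where $\mathcal{Z}_\mathcal{A}=\emptyset$ enters. Alternatively, since $\ell_1^n$ is polyhedral, Theorem \ref{extreme} gives directly $\mathcal{J}_\mathbb{Y}=Ext(B_{(\ell_1^n)^*})$, whence $\phi(\mathcal{J}_\mathbb{Y})=Ext(B_{\ell_\infty^n})$; combined with the chain $\phi(\mathcal{J}_\mathbb{Y})\subseteq\mathcal{N}\subseteq Ext(B_{\ell_\infty^n})$ this forces the equality at once, bypassing the perturbation step.
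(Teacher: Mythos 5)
Your proposal is correct. The first inclusion $\phi(\mathcal{J}_\mathbb{Y})\subseteq\mathcal{N}$ is argued exactly as in the paper: a smooth point $\widetilde{y}\in S_\mathbb{Y}$ gives $M_{\psi(\widetilde{y})}\cap Ext(B_{\ell_\infty^n})=\{\pm\phi(f)\}$ with $J(\widetilde{y})=\{f\}$, so the defining property of a norming set forces $\pm\phi(f)\in\mathcal{N}$. For the reverse inclusion your primary route differs from the paper's: you show that $\phi(\mathcal{J}_\mathbb{Y})$ is itself a (symmetric) norming set, via the perturbation $\widetilde{y}+\delta w$ with $w$ chosen outside the finite union of the proper subspaces $H_i=\{v\in\mathbb{Y}:v_i=0\}$, $i\in Z$ (properness being exactly where $\mathcal{Z}_\mathcal{A}=\emptyset$ enters), and then invoke minimality of $\mathcal{N}$. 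The paper instead gets the reverse inclusion in one line from Theorem \ref{extreme}: strong anti-coproximinality gives $\mathcal{J}_\mathbb{Y}=Ext(B_{(\ell_1^n)^*})$, hence $\mathcal{N}\subseteq Ext(B_{\ell_\infty^n})=\phi(\mathcal{J}_\mathbb{Y})$ — which is precisely the ``alternative'' you note at the end. Your perturbation route is longer but buys genuine generality: it uses only $\mathcal{Z}_\mathcal{A}=\emptyset$ (a consequence of mere anti-coproximinality via Theorem \ref{anti}), so it establishes $\phi(\mathcal{J}_\mathbb{Y})=\mathcal{N}$ for a wider class of subspaces than the lemma states; the paper's route is shorter but leans on the full strength of strong anti-coproximinality, which is all that is needed since the lemma only feeds the contradiction argument of Theorem \ref{no strong}. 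The only points to tidy in your main route are to exclude $\widetilde{y}=0$ when verifying the norming property and to fix a convention for whether $M_g$ records both signs of the norming extreme point; neither affects the conclusion, since $\mathcal{N}$ is symmetric.
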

 
\begin{proof}
	
	Since $\mathbb{Y}$ is a strongly anti-coproximinal subspace of $\ell_1^n$, it follows from Theorem \ref{anti} that $\mathcal{Z}_\mathcal{A} = \emptyset.$  Suppose that $\psi : \ell_1^n \to (\ell_\infty^n)^{*}$ is the canonical isometric isomorphism. Let us first assume that $f\in \mathcal{J}_\mathbb{Y}.$ This implies that there exists a $y\in Sm(\ell_1^n) \cap S_\mathbb{Y}$ such that $f(y)=1.$ Now observe that $\psi(y)(\phi(f)) =  f(y)=1.$ As $\psi(y)$ is a smooth point in $(\ell_{\infty}^n)^*$, we have $ M_{\psi(y)}= \{\pm \phi(f)\}.$ Therefore, $\phi(\mathcal{J}_\mathbb{Y}) \subset \mathcal{N}.$  On the other hand, since $\mathbb{Y}$ is a strongly anti-coproximinal subspace of $\ell_{1}^n$, it follows from Theorem \ref{extreme} that $\mathcal{N} \subset Ext(B_{\ell_\infty^n}) = \phi(Ext(B_{(\ell_1^n)^*})) = \phi(\mathcal{J}_\mathbb{Y}).$ This proves our lemma.	
\end{proof}

Let us now present the desired result.

\begin{theorem}\label{no strong}
	There is no strongly anti-coproximinal subspace in $ \ell_1^n.$
\end{theorem}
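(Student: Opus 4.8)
The plan is to assume, toward a contradiction, that some subspace $\mathbb{Y}$ of $\ell_1^n$ is strongly anti-coproximinal and then translate this into a purely combinatorial statement about sign patterns that no proper subspace can satisfy. Under the standing convention, $\mathbb{Y}$ is a proper subspace, so in particular $\mathbb{Y} \neq \ell_1^n$.

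First I would record the relevant geometry of $\ell_1^n$ and its dual. Writing $\phi$ for the canonical isometric isomorphism between $(\ell_1^n)^*$ and $\ell_\infty^n$, the extreme points of $B_{(\ell_1^n)^*}$ correspond under $\phi$ to the $2^n$ vertices $\{(\epsilon_1, \ldots, \epsilon_n) : \epsilon_i \in \{-1, 1\}\}$ of the cube $B_{\ell_\infty^n}$. A point $y = (y_1, \ldots, y_n) \in S_{\ell_1^n}$ is smooth precisely when all of its coordinates are nonzero, in which case $\phi(J(y)) = (\operatorname{sign} y_1, \ldots, \operatorname{sign} y_n)$; indeed, the smooth points are exactly the relative interiors of the simplicial facets of the cross-polytope $B_{\ell_1^n}$.

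Next, assuming $\mathbb{Y}$ is strongly anti-coproximinal, Theorem \ref{extreme} gives $\mathcal{J}_{\mathbb{Y}} = Ext(B_{(\ell_1^n)^*})$; equivalently, by Lemma \ref{equal}, the minimal norming set $\mathcal{N} = \phi(\mathcal{J}_{\mathbb{Y}})$ coincides with the full vertex set $Ext(B_{\ell_\infty^n})$. Unwinding the definition of $\mathcal{J}_{\mathbb{Y}}$ together with the description of smooth points above, this says: for every sign vector $\epsilon \in \{-1, 1\}^n$ there exists a smooth $y \in S_{\mathbb{Y}}$ with $\operatorname{sign}(y_i) = \epsilon_i$ for all $i$. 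In other words, $\mathbb{Y}$ must meet every open orthant of $\mathbb{R}^n$.

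Finally I would rule this out for a proper subspace, and this is the crux of the argument. Since $\mathbb{Y} \subsetneq \ell_1^n$, the annihilator $\mathbb{Y}^{\perp}$ is nonzero; pick $0 \neq f \in \mathbb{Y}^{\perp}$ and write $g = (g_1, \ldots, g_n) = \phi(f)$, so that $f(y) = \sum_{i=1}^n g_i y_i$ for all $y$. Let $S = \{ i : g_i \neq 0 \} \neq \emptyset$, and choose $\epsilon$ with $\epsilon_i = \operatorname{sign}(g_i)$ for $i \in S$ and $\epsilon_i = 1$ otherwise. By the previous paragraph there is $y \in \mathbb{Y}$ with $\operatorname{sign}(y_i) = \epsilon_i$ for every $i$; then $f(y) = \sum_{i \in S} g_i y_i = \sum_{i \in S} |g_i|\,|y_i| > 0$, contradicting $f \in \mathbb{Y}^{\perp}$. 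The main obstacle is exactly this conversion of strong anti-coproximinality into the orthant-covering condition and its refutation: once that is in place the earlier steps are bookkeeping via Theorem \ref{extreme} and Lemma \ref{equal}. The key idea that keeps the obstruction elementary is to test a single nonzero annihilating functional against one carefully chosen orthant, rather than invoking a region-counting bound for central hyperplane arrangements.
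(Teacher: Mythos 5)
Your proof is correct, and it takes a genuinely different route from the paper's. Both arguments start the same way, using Theorem \ref{extreme} to conclude that $\mathcal{J}_{\mathbb{Y}}$ must be all of $Ext(B_{(\ell_1^n)^*})$; but from there the paper passes to the minimal norming set via Lemma \ref{equal}, sets up the central hyperplane arrangement $H_1,\ldots,H_n$ in the coefficient space $\mathbb{R}^m$, and derives a contradiction from the Ho--Zimmerman bound $2\big[\binom{n-1}{0}+\cdots+\binom{n-1}{m-1}\big]<2^n$ on the number of regions. You instead translate $\mathcal{J}_{\mathbb{Y}}=Ext(B_{(\ell_1^n)^*})$ directly into the geometric statement that $\mathbb{Y}$ meets every open orthant of $\mathbb{R}^n$ (using that the smooth points of $S_{\ell_1^n}$ are exactly the points with no zero coordinate, whose unique support functional is the sign vector), and then kill this with a single nonzero annihilating functional $f\in\mathbb{Y}^{\perp}$ evaluated on a point of $\mathbb{Y}$ lying in the orthant matched to the signs of $\phi(f)$. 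Your version is more elementary and self-contained: it needs neither Lemma \ref{equal}, nor Theorem \ref{anti}, nor the external region-counting result, and it works uniformly for any proper subspace without fixing a basis. What the paper's route buys in exchange is quantitative information --- an explicit upper bound on the cardinality of the minimal norming set of an $m$-dimensional subspace --- which is of some independent interest but is not needed for the theorem itself. The only point worth making explicit if you write this up is the identification $\mathcal{J}_{\mathbb{Y}}=\{$unique support functionals of smooth points of $S_{\mathbb{Y}}\}$, which justifies the orthant reformulation; you have stated the right facts for this.
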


\begin{proof}
	Suppose on the contrary we  assume that $\mathbb{{Y}} $ is a  strongly anti-coproximinal subspace in $ \ell_1^n.$  Clearly, $\mathbb{Y}$ is a proper subspace of $\ell_{1}^n.$ Let   $\mathcal{A}=\{\widetilde{a_1}, \widetilde{a_2}, \ldots, \widetilde{a_m}\}$ be a basis of $ \mathbb{{Y}},$ where $\widetilde{a_k} = (a_1^k, a_2^k, \ldots, a_n^k),$ for all $k\in \{1, 2, \ldots, m\},$ where $1 < m < n$. It follows from Theorem \ref{anti} that $\mathcal{Z}_\mathcal{A}=\emptyset.$ Also  from Theorem \ref{extreme}, we have $|\mathcal{J}_{\mathbb{Y}}|= |Ext(B_{(\ell_1^n)^*})|=2^n.$ By virtue of Lemma \ref{equal}, we note that the cardinality of the minimal norming set of $\mathbb{{Y}}$ is equal to the cardinality of the set $\mathcal{J}_\mathbb{{Y}}.$
	Let us now consider the following hyperspaces in $\mathbb{R}^m$ 
	\[
	H_i = \bigg\{(\beta_1, \beta_2, \ldots, \beta_m)\in \mathbb{R}^m : \sum_{k=1}^m\beta_ka_{i}^k = 0\bigg\},
	\] where $i \in \{1, 2, \ldots, n\}.$
	Now it is easy to observe from  \cite[Th. 2.2]{SSGP2} that there is an one-one correspondence between the minimal norming set of $\mathbb{{Y}}$ and the set of regions $\mathbb{R}^m$ formed  by $H_i$'s for all $i\in\{1, 2, \ldots, n\}.$ 
	Applying \cite[Th. 1]{HZ}, we obtain that these hyperspaces divide $ \mathbb{R}^m$ into at most $ 2 [ \binom{n-1}{0} + \binom{n-1}{1}+ \ldots + \binom{n-1}{m-1}]$ regions.  Therefore the cardinality of the minimal norming set of $\mathbb{Y}$ is at most  $ 2 [ \binom{n-1}{0} + \binom{n-1}{1}+ \ldots + \binom{n-1}{m-1}]$.  Since $ m < n,$ it is immediate that
	\[ 2 \bigg[ \binom{n-1}{0} + \binom{n-1}{1}+ \ldots + \binom{n-1}{m-1}\bigg] < 2^n.
	\]
	This contradicts that  the cardinality of the minimal norming set of $\mathbb{Y}$ is $ 2^n.$ This completes the proof of the theorem.
\end{proof}

We would like to  end this article with the following  remarks regarding the anti-coproximinal subspaces and the strongly anti-coproximinal subspaces.
\begin{remark}
(i)   We have already shown that there exists a strongly anti-coproximinal subspace in $ \ell_\infty^n ( n \geq 3),$ whereas there does not exist any strongly anti-coproximinal subspace in $\ell_1^n (n \geq 3).$ Using Theorem \ref{extreme}, it is possible to give a geometric interpretation of this phenomenon in a visually appealing manner. Indeed, we observe that there is a subspace in $\ell_\infty^n$ which intersects  the interior of each of the facet of $B_{\ell_\infty^n},$ whereas $\ell_1^n$ does not contain any such subspaces (see Theorem \ref{no strong}).\\
(ii) We note that the anti-coproximinal subspaces are the least favorable  subspaces from the perspective of best coapproximation. In this study we have observed that in general there may be many subspaces in a polyhedral Banach spaces which are anti-coproximinal. This further illustrates the non-triviality and the computational difficulty associated with the best coapproximation problem, even in finite-dimensional Banach spaces.  
We note from \cite[Th. 4]{J2} that a Banach space $\mathbb{X}$ having three or more dimension is an inner product space if and only if for each hyperspace $H$ of $\mathbb{X},$ there exists an $x \in \mathbb{X}$ such that $H\perp_B x.$ In any Banach space, it is easy to verify that the anti-coproximinal hyperspaces are precisely those which are not orthogonal to any element of $\mathbb{X}.$   Moreover, a strongly anti-coproximinal hyperspace $H$ is precisely those for which there does not exist any element $x \in \mathbb{X} $ such that that $H \perp_B^\epsilon x,$ for some $\epsilon \in [0,1).$
From Theorem \ref{no strong}, for any hyperspace $H$ of $\ell_1^n,$ we note that there exists an $\epsilon \in [0,1)$ and an $x \in \ell_1^n$ such that $H \perp_B^{\epsilon} x.$ \\
(iii) It is known \cite{LT, PS} that given a subspace
$\mathbb{Y}$ of a Banach space $\mathbb{X}$ and an element $x \notin \mathbb{Y}$, $y_0$ is a best coapproximation to $x$ out of
$\mathbb{Y}$ if and only if there exists a norm one projection from $span\{x, \mathbb{Y}\}$  to $\mathbb{Y}.$ It is clear that
\emph{$\mathbb{Y}$ is an anti-coproximinal subspace in $\mathbb{X}$ if and only if  given any $x \notin \mathbb{Y}, $ there does not exist any norm one projection from $span\{x, \mathbb{Y}\}$  to $\mathbb{Y}.$  In other words, $\mathbb{Y}$ is an anti-coproximinal subspace in $\mathbb{X}$ if and only if for any subspace $\mathbb{Z}$ which properly contains $\mathbb{Y},$ there does not exist any norm one projection from $\mathbb{Z}$ to $\mathbb{Y}.$\\}

\end{remark}

\noindent {\textbf{Data availability statement.}}\\
Data sharing is not applicable to this article as no new data were created or analyzed in this study.\\

\noindent {\textbf{Disclosure statement.}}\\
There are no relevant financial or non-financial competing interests to report.


\begin{thebibliography}{99}


\bibitem{B} Birkhoff, G.,  \textit{Orthogonality in linear metric spaces}, Duke Math. J. \textbf{1} (1935), 169--172.

\bibitem{Bruck} Bruck Jr, R. E., \textit{Property of fixed point sets of nonexpansive mappings in Banach spaces}, Trans. Amer. Math. Soc., \textbf{179} (1973), 251 - 262. 

\bibitem{Bruck1} Bruck Jr, R. E.,  \textit{Nonexpansive projections onto subsets of Banach spaces}, Pacific. J. Math., \textbf{47} (2) (1973), 341 - 355.








\bibitem{C05}  Chmieli\'nski, J., 
\textit{On an $\epsilon-$Birkhoff orthogonality},
Journal of Inequalities in Pure and Applied Mathematics,
\textbf{6}(3) (2005) Article 79.	

\bibitem{C} Chmieli\'nski, J., \textit{Approximate Birkhoff-James
	Orthogonality in Normed Linear Spaces
	and Related Topics}, Operator and norm inequalities and related topics, 303–320,
Trends in Math., Birkhäuser/Springer, Cham, 2022.


\bibitem{D} Dragomir, S. S., \textit{On approximation of continuous linear functionals in normed linear spaces}, An. Univ. Timişoara Ser. Ştiinţ. Mat., \textbf{29} (1991), 51–58. 


\bibitem{FF} Franchetti, C., Furi, M., \textit{Some characteristic properties of real Hilbert spaces},  Rev. Roumaine Math. Pures Appl., \textbf{17} (1972), 1045-1048.

\bibitem{G} Giles, J. R., \textit{Strong differentiability of the norm and rotundity of the dual}, J. Austral. Math. Soc., Ser. A, \textbf{26} (1978), 302–308.


\bibitem{HZ} Ho, C., Zimmerman, S., \textit{On the number of regions in an $m$-dimensional space cut by $n$ hyperplanes}, Austral. Math. Soc. Gaz., \textbf{33} (2006), 260-264.



\bibitem{J}James, R. C., \textit{Orthogonality and linear functionals in normed linear spaces}, Trans.  Amer. Math. Soc., \textbf{61} (1947), 265-292.

\bibitem{J2}	James, R. C., \textit{Inner products in normed linear spaces}, Bull. Amer. Math Soc., \textbf{53} (1947),
559-566.


\bibitem{KL} Kamiska, A., Lewicki, G., \textit{Contractive and optimal sets in Banach
spaces}, Math. Nachr., \textbf{268} (2004), 74-95.

\bibitem{LT} Lewicki, G., Trombetta, G., \textit{Optimal and one-complemented subspaces}, Monatsh. Math., \textbf{153} (2008), 115-132.

\bibitem{M} Megginson, R. E., \textit{An introduction to Banach space theory}, Graduate Texts in Mathematics, 183 (1998) Springer-Verlag, New York.

\bibitem{N} Narang, T. D., \textit{On best coapproximation in normed linear spaces}, Rocky Mountain J. Math., \textbf{22} (1992), 265-287.

\bibitem{PS} Papini, P. L., Singer, I., \textit{Best coapproximation in normed linear spaces}, Monatsh Math., \textbf{88} (1979), 27-44.




\bibitem{RS} Rao, G.S., Swaminathan, M., \textit{Best coapproximation and schauder bases in Banach spaces}, Acta Sci. Math., \textbf{54} (1990), 339-359.


\bibitem{R} Rudin, W., \textit{Functional analysis}, McGraw-Hill, Inc., 1991.



\bibitem{SPBB} Sain, D.,  Paul, K., Bhunia, P., Bag, Santanu., \textit{On the numerical index of polyhedral Banach space}, Linear Algebra Appl., \textbf{577} (2019), 121-133.



\bibitem{SSGP} Sain, D., Sohel, S., Ghosh, S., Paul, K.,  \textit{On best coapproximations in  subspaces of diagonal matrices}, Linear Multilinear Algebra, \textbf{71} (2023), 47-62.


\bibitem{SSGP2} Sain, D., Sohel, S., Ghosh, S., Paul, K., \textit{On Best coapproximation problem in $\ell_1^n$},  Linear Multilinear Algebra, DOI: 10.1080/03081087.2022.2153101.


\bibitem{W} Westphal, U.,  \textit{Cosuns in $\ell_p(n)$}, J. Approx. Theory, \textbf{54} (1988), 287 - 305.



\end{thebibliography}
\end{document}